\documentclass[a4paper,12pt]{article}
\usepackage[margin=1in]{geometry}
\usepackage[T1]{fontenc}
\usepackage[utf8]{inputenc}
\usepackage{amsmath, amssymb, amsthm}
\usepackage{tikz}
\usepackage{hyperref}
\usepackage{todonotes}
\usepackage{tabularx}
\usepackage{pgfplotstable}
\usepackage{pgfplots}
\usepackage{booktabs}
\pgfplotsset{compat=1.18}
\usepackage{enumitem}

\usepackage[bibencoding=utf8,style=numeric,maxbibnames = 99,giveninits = true]{biblatex}

\newtheorem{definition}{Definition}[section]
\newtheorem{proposition}[definition]{Proposition}
\newtheorem{theorem}[definition]{Theorem}
\newtheorem{lemma}[definition]{Lemma}
\newtheorem{corollary}[definition]{Corollary}
\newtheorem{example}[definition]{Example}

\newtheorem{assumption}[definition]{Assumption}

\newtheorem{algo}[definition]{Algorithm}

\newcommand{\R}{\mathbb{R}}
\newcommand{\N}{\mathbb{N}}
\newcommand{\K}{\mathbb{K}}

\newcommand{\dom}{\operatorname{dom}}

\newcommand{\sign}{\operatorname{sign}}
\newcommand{\dist}{\operatorname{dist}}

\DeclareMathOperator*{\Limsup}{Limsup}

\newcommand{\amsmscLink}[1]{\href{http://www.ams.org/mathscinet/msc/msc2020.html?t=#1}{#1}}

\numberwithin{equation}{section}

\DeclareSourcemap{
  \maps[datatype=bibtex]{
    \map{
      \step[fieldsource=fjournal, fieldtarget=journaltitle]
    }
    \map{
      \step[fieldsource=fseries, fieldtarget=series]
    }
  }
}
\AtEveryBibitem{\clearlist{language}}
\AtEveryBibitem{\clearfield{issn}}
\AtEveryBibitem{\clearfield{isbn}}

\AtBeginBibliography{\small}

\title{Convergence of the Safeguarded Augmented Lagrangian Method under the 
Polyak-\L{}ojasiewicz constraint qualification for Constrained Composite Optimization}
\author{Christian Kanzow and Jannis Krüger}

\begin{document}

\maketitle

\begin{abstract}
\noindent
In this work we provide theoretical and practical results of the Safeguarded Augmented Lagrangian Method (SALM) for constrained composite optimization problems whose objective is the sum of a smooth and a nonsmooth function. We obtain global convergence results to an M-stationary point for SALM under the
Polyak-\L{}ojasiewicz constraint qualification (P\L{}CQ). For this result the
boundedness of the Lagrange multipliers is crucial, and this is shown under 
the assumption that the nonsmooth
part of the objective function is locally Lipschitz continuous. A counterexample
shows that one cannot expect to get bounded multipliers without such an
assumption. The performance of the algorithm is evaluated numerically on a set of sparse portfolio optimization problems with two different regularization terms, one
being Lipschitz and the other one being non-Lipschitz. The results are 
significantly better for the Lipschitz sparsity term, whereas the underlying
method generates seemingly unbounded multipliers in many instances when using the 
non-Lipschitz sparsity function.
\end{abstract}

\noindent 
{\bf Keywords.} Safeguarded augmented Lagrangian method, global convergence,
M-statio\-narity, Polyak-\L{}ojasiewicz constraint qualification, locally
Lipschitz regularization function, composite optimization.

\noindent
{\bf Mathematics Subject Classification (2020)}
\amsmscLink{49J53} $\cdot$ 
	\amsmscLink{65K05} $\cdot$ 
	\amsmscLink{90C30}

\section{Introduction}

We consider constrained optimization problems of the form
\begin{align}\label{optProb}
 \min_{x\in \R^n}f(x)+\varphi(x) \quad \text{s.t.}\quad g(x)\leq 0, \quad  h(x)=0
 \end{align}
under the following assumptions:

\begin{assumption}\label{assu:optProb}
 \begin{enumerate}[label = (\roman*)]
  \item The functions $f:\R^n\to \R$, $g:\R^n \to \R^m$ and $h:\R^n\to \R^p$ are continuously differentiable.
  \item $\varphi$ is a locally Lipschitz continuous function.
 \end{enumerate}
\end{assumption}

Problems of this kind arise in a wide range of topics such as engineering, machine learning, optimal control or signal processing, with tasks like image restoration \cite{Bian_2015}, low-rank matrix completion \cite{Cai_2010}, compressed sensing \cite{Wright_2009}, clustering \cite{Song_2007}, principal component analysis \cite{Lu_2011} and model predictive control \cite{De_Marchi_2020}.

The theoretical difficulty for solving problems of such composite nature is the non-smooth function in the objective, as this requires an extension of differentiability and stationarity concepts that may lack the nice properties of smooth problems.

In this paper we consider the Safeguarded Augmented Lagrangian Method (SALM) as a suitable method for numerically solving composite constrained problems. Building on the classical Augmented Lagrangian Method by Powell \cite{Powell_1969}, Hestenes \cite{Hestenes_1969}, and Rockafellar \cite{Rockafellar1973}, the safeguarded version introduced for the \textsc{ALGENCAN} solver \cite{Andreani_2008} is a powerful and versatile choice for smooth constrained optimization due to its algorithmic simplicity and extensive global convergence theory, see also \cite{BirginBook}.

As Augmented Lagrangian methods require solving an unconstrained minimization problem in each step, a practically useful extension of SALM to composite problems necessitates the existence of subproblem solvers which can at least find subdifferentially stationary points of unconstrained composite problems satisfying Assumption \ref{assu:optProb}. More recently, suitable results for the Proximal Gradient Method \cite{Kanzow_2022} and Proximal Quasi Newton Methods \cite{De_Marchi_2022,simeon_deMarchi_2026} have been established. This gave the opportunity to develop a global convergence theory for SALM and constrained composite problems in \cite{De_Marchi_2023}, using even a more general problem setting via geometric constraints, and only requiring $\varphi$ to be proper and
lower semicontinuous (lsc). In the latter paper, the authors use AM-regularity as a constraint qualification in order to show subsequential convergence of the SALM sequence to a M-stationary point for the more general problem.

We want to follow a different approach here. As we consider a more specialized setting with explicit equality and inequality constraints, we are looking to use the least strict constraint qualification tailored to this constraint setting that also still ensures a global convergence result of the iterates. For this approach, we mainly follow the considerations of Andreani et al.\ in \cite{Andreani_2025}. In that paper, the authors introduce a constraint qualification (P\L{}CQ) using a Polyak-\L{}ojasiewicz inequality, show that it is in fact equivalent to an Error Bound Condition (EB), and also show that this is the least restrictive constraint qualification for smooth problems that still ensures boundedness of the multiplier sequences, which is in return pivotal in establishing convergence of the SALM iterates to an M-stationary point.

In view of these results cited above, $\varphi$ being locally Lipschitz continuous may seem like a quite restrictive assumption for the regularization term. However, this restriction is necessary for the required multiplier boundedness, as illustrated in Section \ref{sect:counterex}.

The rest of this work is organized as follows: Section~\ref{Sec:Background}
summarizes some basic facts from variational analysis.
In Section~\ref{sect:convAna}, we extend the global convergence theory of SALM under the P\L{}CQ from \cite{Andreani_2025} to our composite optimization setting \eqref{optProb} under 
Assumption~\ref{assu:optProb}. We then provide a counterexample that 
demonstrates the necessity of the local Lipschitz assumption for $\varphi$ 
in Section~\ref{sect:Lip}. We present some numerical results of SALM applied to 
sparse portfolio optimization comparing Lipschitz and non-Lipschitz regularization functions in Section~\ref{sec:numerical}, and conclude the paper with some
final remarks in Section~\ref{Sec:Conclusions}.

Notation: Let $\Omega$ denote the feasible set of the optimization problem \eqref{optProb}. The active set for a feasible point is written as
\begin{align*}
	I(x^*):=\{i\in \{1,\ldots,m\}:g_i(x^*)=0\}.
\end{align*}
The sets of non-negative and positive real numbers are denoted by $\R_+$ and $\R_{++}$, respectively. For $v\in \R^n$ we write
\begin{align*}
 v_+:=(\max\{0,v_1\},\ldots,\max\{0,v_n\})^T
\end{align*}
for the projection of $ v $ onto the nonnegative orthant $\R^n_+$.
The symbol $\|\cdot\|$ always denotes the Euclidean norm, and $B_r(x)$ and $\overline{B}_r(x)$ the corresponding open and closed balls 
with radius $r$ centered at $x$, respectively. Finally, $\dist(x,S)$ denotes the Euclidean distance $\inf_{w\in S}\|x-w\|$ from $x\in \R^n$ to $S\subseteq \R^n$.

\section{Mathematical Background}\label{Sec:Background}

The set of extended real numbers is denoted by 
$\overline{\R}:=\R\cup \{\pm \infty\}$.
We write $x\to^f x^*$ if $x\to x^*$ and $f(x)\to f(x^*)$, also called $f$\emph{-attentive convergence}. For $f:\R^n\to \overline{\R}$ we write $\dom(f):=\{x\in \R^n\mid  f(x)<+\infty\}$ and say that $f$ is proper, if $\dom(f)\neq \emptyset$ and $f(x)>-\infty$ for all $x\in \R^n$. 

For our convergence analysis we recall some definitions and classical results from nonsmooth analysis as presented in the standard sources \cite{Morduchovic2018} and \cite{Rockafellar2009}.

\begin{definition}
Let $f:\R^n\to\overline{\R}$ be proper, lsc and $x\in \dom (f)$. 
\begin{enumerate}[label = (\alph*)]
\item The set
\begin{align*}
\hat{\partial} f(x):=\left\lbrace v\in\R^n\biggm\vert \liminf_{x\to\bar{x}}\frac{f(x)-f(\bar{x})-v^T(x-\bar{x})}{\|x-\bar{x}\|}\geq 0\right\rbrace
\end{align*}
is called the \emph{Fréchet subdifferential} of $ f $ at $ x $.
\item The set
\begin{align*}
\partial f(x):=\lbrace v\in \R^n \mid \exists \{x^k\}, \{v^k\}\subseteq \R^n : x^k \to^f x, v^k\to v, v^k\in \hat{\partial}f(x^k)\, \forall k\in\N \rbrace
\end{align*}
is called the \emph{limiting/Mordukhovich subdifferential} of $ f $ at $ x $.
\end{enumerate}
\end{definition}

By the definitions of the subdifferentials it is apparent that
\begin{equation}\label{eq:inclusionFrechetMord}
\hat{\partial} f(x)\subseteq \partial f(x)
\end{equation}
holds for all $x\in\dom(f)$.

These subdifferentials admit a generalized Fermat rule \cite[Proposition 1.30 (i)]{Morduchovic2018}, which leads to a natural extension of stationarity for an unconstrained non-smooth setting.

\begin{lemma}[Subdifferential Fermat rule]\label{lem:genFermat}
Let $f$ be proper and lsc and let $f$ have a local minimum in $x^*$. Then
\begin{equation}\label{eq:Fstationary}
0\in \hat{\partial} f(x^*)
\end{equation}
and
\begin{equation}\label{eq:Mstationary}
0\in \partial f(x^*).
\end{equation}
\end{lemma}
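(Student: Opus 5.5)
The plan is to prove the Fréchet inclusion \eqref{eq:Fstationary} directly from the definition of $\hat{\partial} f(x^*)$ with the candidate $v=0$. Then \eqref{eq:Mstationary} follows immediately from the inclusion \eqref{eq:inclusionFrechetMord}. No deeper machinery is needed.

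First, since $x^*$ is a local minimizer of the proper function $f$, we have $f(x^*)<+\infty$, so $x^*\in\dom(f)$ and both subdifferentials are defined at $x^*$. By local minimality there is a radius $r>0$ with $f(x)\geq f(x^*)$ for all $x\in B_r(x^*)$. For every such $x\neq x^*$ the difference quotient in the definition of the Fréchet subdifferential with $v=0$ satisfies
\begin{align*}
\frac{f(x)-f(x^*)-0^T(x-x^*)}{\|x-x^*\|}=\frac{f(x)-f(x^*)}{\|x-x^*\|}\geq 0 .
\end{align*}
The $\liminf$ as $x\to x^*$ only involves points eventually lying in $B_r(x^*)$, so it is $\geq 0$. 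This shows $0\in\hat{\partial} f(x^*)$, which is \eqref{eq:Fstationary}.

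For \eqref{eq:Mstationary} we invoke \eqref{eq:inclusionFrechetMord}, that is, $\hat{\partial} f(x^*)\subseteq\partial f(x^*)$. Alternatively, one can take the constant sequences $x^k:=x^*$ and $v^k:=0$: they trivially satisfy $x^k\to^f x^*$ and $v^k\to 0$, and $v^k\in\hat{\partial} f(x^k)$ by the first step.

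There is no real obstacle in this argument. The only point requiring care is bookkeeping: checking that $x^*\in\dom(f)$, so that the quotient is well defined in $\overline{\R}$. Points where $f(x)=+\infty$ only make the quotient $+\infty$ and therefore cannot decrease the $\liminf$. Lower semicontinuity is not actually needed for the argument; it is only part of the standing setting in which the subdifferentials are defined.
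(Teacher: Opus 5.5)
Your argument is correct and is exactly the standard direct proof behind the paper's citation of \cite[Proposition 1.30 (i)]{Morduchovic2018}; the paper gives no proof of its own for this lemma. One small correction: properness alone does not give $f(x^*)<+\infty$, since any point in the interior of $\R^n\setminus\dom(f)$ trivially satisfies $f(x^*)\le f(x)$ nearby. So $x^*\in\dom(f)$ should instead come from the usual convention that a local minimum has finite value, which the statement implicitly requires anyway because both subdifferentials are only defined on $\dom(f)$.
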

Points satisfying these conditions are called \emph{F- or S-stationary} \eqref{eq:Fstationary} (S for strong) and \emph{M-stationary} \eqref{eq:Mstationary} respectively. We can relax the stationarity condition by introducing $\varepsilon$-stationarity. Here, we only introduce this terminology for the Mordukhovich subdifferential.

\begin{definition}
Let $f:\R^n\to\overline{\R}$ be proper and lsc. A vector $x^*\in\dom (f)$ is called an $\varepsilon$\emph{-stationary} point, if there exists an $\eta\in \partial f(x^*)$ with $\|\eta\|\leq \varepsilon$.
\end{definition}

In the following we will list the properties of the Mordukhovich subdifferential used in the convergence analysis. The first result is the robustness of the Mordukhovich subdifferential, which serves as a non-smooth substitute for gradient continuity.

\begin{theorem}[Robustness of the Limiting Subdifferential]\label{thm:robustnessMord}
Let $f:\R^n\to\overline{\R}$ be proper, lsc, and $\bar{x}\in\dom(f)$ be given. Then
\begin{align*}
\partial f(\bar{x})=\Limsup_{x\to^f \bar{x}}\partial f(x):=\{v\in\R^n\mid \exists \{x^k\} \to^f \bar{x},\exists \{v^k\}\to v: v^k\in\partial f(x^k) \ \forall k\in\N \}.
\end{align*}
\end{theorem}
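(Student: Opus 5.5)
The plan is to prove the two inclusions separately. The inclusion $\partial f(\bar{x})\subseteq \Limsup_{x\to^f \bar{x}}\partial f(x)$ is immediate. Given $v\in\partial f(\bar{x})$, the definition of the limiting subdifferential supplies sequences $x^k\to^f\bar{x}$ and $v^k\to v$ with $v^k\in\hat{\partial} f(x^k)$. By \eqref{eq:inclusionFrechetMord} we also have $v^k\in\partial f(x^k)$, so these same sequences witness $v\in\Limsup_{x\to^f \bar{x}}\partial f(x)$.

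For the converse inclusion, take $v$ together with $x^k\to^f\bar{x}$, $v^k\to v$ and $v^k\in\partial f(x^k)$. The idea is a diagonal argument. For each fixed $k$, apply the definition of $\partial f(x^k)$: there are $y^{k,j}\to^f x^k$ and $w^{k,j}\to v^k$ as $j\to\infty$ with $w^{k,j}\in\hat{\partial} f(y^{k,j})$. Choose an index $j_k$ so large that
\begin{align*}
\|y^{k,j_k}-x^k\|\le 1/k,\quad |f(y^{k,j_k})-f(x^k)|\le 1/k,\quad \|w^{k,j_k}-v^k\|\le 1/k.
\end{align*}
Such a $j_k$ exists because $f(y^{k,j})\to f(x^k)$, and $f(x^k)$ is finite since $x^k\in\dom(f)$ for all large $k$.

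Now set $z^k:=y^{k,j_k}$ and $u^k:=w^{k,j_k}$. The triangle inequality gives $z^k\to\bar{x}$, $f(z^k)\to f(\bar{x})$ and $u^k\to v$, and by construction $u^k\in\hat{\partial} f(z^k)$. Hence $v\in\partial f(\bar{x})$ directly from the definition.

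The only delicate point is the $f$-attentive convergence in the diagonal step. We must make sure the function values, and not only the points, converge; this is exactly why the bound on $|f(y^{k,j_k})-f(x^k)|$ is imposed when choosing $j_k$. We also need the finiteness of $f(x^k)$, which holds because $f(x^k)\to f(\bar{x})\in\R$. Properness and lower semicontinuity are not otherwise needed beyond making the subdifferentials well defined.
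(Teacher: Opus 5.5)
Your proof is correct. The first inclusion follows from $\hat{\partial} f\subseteq\partial f$. For the second, the diagonal choice of $j_k$ works, including the bound on $|f(y^{k,j_k})-f(x^k)|$ that keeps the convergence $f$-attentive.

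The paper gives no proof of this theorem; it quotes the result from the standard monographs of Mordukhovich and Rockafellar--Wets. Your diagonal argument, applied to the paper's sequential definition of $\partial f$, is the standard way this result is established.
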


The next important result is the sum rule, which essentially allows us to split our composite problem and deal with the differentiable and non-differentiable part separately.

\begin{theorem}[Sum Rule]\label{thm:SumRuleMord}
Let $f,g:\R^n\to\overline{\R}$ be proper, lsc, and let $\bar{x}\in\dom (f)\cap \dom(g)$ be given. Then
\begin{enumerate}[label = (\alph*)]
\item If $f$ is differentiable in $\bar{x}$, then $\hat{\partial}(f+g)(\bar{x})=\nabla f(\bar{x})+\hat{\partial} g(\bar{x})$.
\item If $f$ is differentiable in a neighborhood of $\bar{x}$ and continuously differentiable in $\bar{x}$, then $\partial(f+g)(\bar{x})=\nabla f(\bar{x})+\partial g(\bar{x})$.
\end{enumerate}
\end{theorem}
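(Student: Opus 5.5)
The plan is to prove part (a) directly from the definition of the Fréchet subdifferential, and then obtain part (b) by passing to limits in (a).

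\emph{Part (a).} Differentiability of $f$ at $\bar{x}$ gives
\[
f(x)-f(\bar{x})-\nabla f(\bar{x})^T(x-\bar{x}) = o(\|x-\bar{x}\|).
\]
For any $w\in\R^n$ we therefore have
\begin{align*}
\frac{(f+g)(x)-(f+g)(\bar{x})-(\nabla f(\bar{x})+w)^T(x-\bar{x})}{\|x-\bar{x}\|}
=\frac{g(x)-g(\bar{x})-w^T(x-\bar{x})}{\|x-\bar{x}\|}+\frac{o(\|x-\bar{x}\|)}{\|x-\bar{x}\|}.
\end{align*}
The last term tends to zero, so the $\liminf$ of the left-hand side is nonnegative if and only if the $\liminf$ of the $g$-quotient is nonnegative. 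Hence $\nabla f(\bar{x})+w\in\hat{\partial}(f+g)(\bar{x})$ if and only if $w\in\hat{\partial} g(\bar{x})$. Writing any $v$ as $\nabla f(\bar{x})+w$ with $w=v-\nabla f(\bar{x})$, this is exactly the identity in (a).

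\emph{Part (b), first inclusion.} Let $U$ be the neighbourhood of $\bar{x}$ on which $f$ is differentiable. Take $v\in\partial(f+g)(\bar{x})$, with sequences $x^k\to\bar{x}$, $(f+g)(x^k)\to(f+g)(\bar{x})$ and $v^k\in\hat{\partial}(f+g)(x^k)$, $v^k\to v$. For large $k$ we have $x^k\in U$, so part (a) applied at $x^k$ gives $w^k:=v^k-\nabla f(x^k)\in\hat{\partial} g(x^k)$.

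Two facts then close the argument:
\begin{enumerate}[label = (\roman*)]
\item Since $f$ is continuously differentiable at $\bar{x}$, $\nabla f(x^k)\to\nabla f(\bar{x})$, hence $w^k\to v-\nabla f(\bar{x})$.
\item Since $f$ is continuous at $\bar{x}$ (it is differentiable there), $f(x^k)\to f(\bar{x})$, and therefore $g(x^k)=(f+g)(x^k)-f(x^k)\to g(\bar{x})$. So $x^k\to^g\bar{x}$.
\end{enumerate}
By the definition of the limiting subdifferential, $v-\nabla f(\bar{x})\in\partial g(\bar{x})$, i.e.\ $v\in\nabla f(\bar{x})+\partial g(\bar{x})$.

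\emph{Part (b), reverse inclusion.} This is symmetric. Start from $w\in\partial g(\bar{x})$ with $x^k\to^g\bar{x}$ and $w^k\in\hat{\partial} g(x^k)$, $w^k\to w$. By (a), $\nabla f(x^k)+w^k\in\hat{\partial}(f+g)(x^k)$, and this sequence converges to $\nabla f(\bar{x})+w$. Continuity of $f$ at $\bar{x}$ gives $(f+g)(x^k)\to(f+g)(\bar{x})$, so $\nabla f(\bar{x})+w\in\partial(f+g)(\bar{x})$.

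The only delicate point is the attentive convergence: we must transfer $f$-attentiveness between $g$ and $f+g$ and use continuity of $\nabla f$ at $\bar{x}$. Both are supplied by the hypothesis that $f$ is differentiable near $\bar{x}$ and continuously differentiable at $\bar{x}$. Beyond this, everything is bookkeeping with the definitions.
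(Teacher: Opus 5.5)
Your proof is correct. The paper gives no proof of its own here; it recalls the result from the standard sources (Mordukhovich; Rockafellar--Wets). Your argument is the standard one: definition-chasing with the first-order expansion of $f$ for (a), then applying (a) at the approximating points and passing to the limit for (b), using continuity of $\nabla f$ at $\bar{x}$ and continuity of $f$ to transfer attentive convergence between $g$ and $f+g$. It uses exactly the paper's hypothesis of differentiability near $\bar{x}$ with $\nabla f$ continuous at $\bar{x}$.
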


If we have local Lipschitz continuity around a given point, the Mordukhovich subdifferential has the following useful boundedness property, cf. \cite[Theorem 9.13]{Rockafellar2009}.

\begin{proposition}\label{prop:LipschitzSubdiffBounded}
Let $f:\R^n\to\overline{\R}$ be locally Lipschitz around some point $\bar{x}\in\dom(f)$ with constant $L=L(\bar{x})>0$. Then $\|v\|\leq L$ for all $v\in \partial f(\bar{x})$.
\end{proposition}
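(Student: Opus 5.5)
The plan is to prove the bound first for Fréchet subgradients at points near $\bar{x}$, and then pass to limiting subgradients using the definition of $\partial f(\bar{x})$.

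By assumption there is a radius $r>0$ such that $|f(x)-f(y)|\le L\|x-y\|$ for all $x,y\in B_r(\bar{x})$. In particular $f$ is finite on this ball.

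\textbf{Step 1 (Fréchet subgradients).} Fix $x\in B_r(\bar{x})$ and $v\in\hat{\partial} f(x)$ with $v\neq 0$. We test the liminf in the definition of $\hat{\partial} f(x)$ along the steepest-descent direction $d:=-v/\|v\|$, using the points $x+td$ with $t\downarrow 0$; these lie in $B_r(\bar{x})$ for small $t$. The Lipschitz bound gives
\begin{align*}
f(x+td)-f(x)-v^T(td)\le Lt - t\|v\| .
\end{align*}
Dividing by $\|td\|=t$, the quotient is at most $L-\|v\|$. The liminf over all sequences converging to $x$ is at most the limit along this particular ray, so the defining inequality yields $0\le L-\|v\|$. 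Hence $\|v\|\le L$ for every Fréchet subgradient at every point of $B_r(\bar{x})$.

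\textbf{Step 2 (passage to the limit).} Let $v\in\partial f(\bar{x})$. By definition there exist $x^k\to^f\bar{x}$ and $v^k\to v$ with $v^k\in\hat{\partial} f(x^k)$. For all large $k$ we have $x^k\in B_r(\bar{x})$, so Step 1 gives $\|v^k\|\le L$. Letting $k\to\infty$ gives $\|v\|\le L$.

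\textbf{Main obstacle.} The only delicate point is that the Lipschitz constant must hold uniformly on a whole neighborhood of $\bar{x}$, not just at $\bar{x}$ itself. Step 2 needs the bound at the nearby points $x^k$, and this uniform neighborhood is exactly what "locally Lipschitz around $\bar{x}$" provides.

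Since $f$ is continuous near $\bar{x}$, the $f$-attentiveness of the convergence $x^k\to^f\bar{x}$ is automatic and plays no role in the argument.
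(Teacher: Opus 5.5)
Your overall strategy is sound: bound Fr\'echet subgradients uniformly on a ball around $\bar x$, then pass to the limit in the definition of $\partial f(\bar x)$. However, Step 1 as written has a sign error that breaks the key estimate. With $d=-v/\|v\|$ you have $v^T(td)=-t\|v\|$, hence
\begin{align*}
f(x+td)-f(x)-v^T(td)=f(x+td)-f(x)+t\|v\|\le t\,(L+\|v\|),
\end{align*}
not $\le t(L-\|v\|)$. The resulting upper bound $L+\|v\|$ on the difference quotient is nonnegative for every $v$. So it is compatible with the Fr\'echet condition $\liminf\ge 0$ and gives no information on $\|v\|$. The descent direction cannot work here. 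The Fr\'echet inequality $f(y)\ge f(x)+v^T(y-x)+o(\|y-x\|)$ constrains $\|v\|$ only through displacements where $v^T(y-x)$ is large and \emph{positive}. There it forces $f$ to grow at rate at least $\|v\|$, and the Lipschitz bound caps that growth at $L$.

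The fix is to take the ascent direction $d:=v/\|v\|$. Then $v^T(td)=t\|v\|$, and your displayed inequality $f(x+td)-f(x)-v^T(td)\le Lt-t\|v\|$ becomes correct. The $\liminf$ over all $y\to x$ is at most the $\liminf_{t\downarrow 0}$ along this ray, so you obtain $0\le L-\|v\|$. With this change, Steps 1 and 2 form a complete proof, and your remark that $f$-attentiveness is automatic is correct.

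For comparison, the paper gives no proof of its own and simply cites \cite[Theorem 9.13]{Rockafellar2009}. That theorem proves the sharper statement that $\partial f(\bar x)$ is nonempty and compact with $\max\{\|v\|: v\in\partial f(\bar x)\}$ equal to the Lipschitz modulus $\operatorname{lip} f(\bar x)\le L$. Your corrected argument is the elementary upper-bound half of that result, which is all the paper uses later in Theorem~\ref{thm:lagrangeBounded}.
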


\section{The Safeguarded Augmented Lagrangian Method}\label{sect:convAna}

Recall that the \emph{Lagrangian function} of the optimization problem \eqref{optProb} is given by
\begin{align*}
\mathcal{L}(x,\lambda,\mu)=f(x)+\varphi(x)+\lambda^Tg(x)+\mu^Th(x),
\end{align*}
whereas the (Hestenes-Powell-Rockafellar) \emph{augmented Lagrangian function} with penalty parameter $\rho>0$ is defined by
\begin{align*}
\mathcal{L}_\rho(x,\lambda,\mu)=f(x)+\varphi(x)+\frac{\rho}{2}\left(\left\|\left( g(x)+\frac{\lambda}{\rho}\right)_+\right\|^2+\left\|h(x)+\frac{\mu}{\rho}\right\|^2\right)
\end{align*}
for some $x\in\R^n$ and certain vectors $\lambda\in \R^m_+$, $\mu\in\R^p$,
usually called the \emph{Lagrange multipliers}.

We define the Safeguarded Augmented Lagrangian Method (SALM) similar to \cite{Andreani_2025} as follows.

\begin{algo} (Safeguarded Augmented Lagrangian Method)
\label{algo:SALM}
\begin{enumerate}[label=(S.\arabic*),start=0]
\item Let $u_{\max}>0$, $v_{\min}< v_{\max}$,  $\{u_k\}\subseteq [0,u_{\max}]^m$ and $\{v_k\}\subseteq [v_{\min},v_{\max}]^p$ be bounded sequences. Let $\{\varepsilon_k\}\subseteq \R_{++}$ with $\varepsilon_k\to 0$. Choose a starting point $(x^0,\lambda^0,\mu^0)\in \R^n\times\R^m_+\times \R^p$ and a $\rho_0>0$ as well as parameters $\tau\in (0,1),\gamma>1$ and set $k:=0$.
\item If a suitable termination criterion holds for $(x^k,\lambda^k,\mu^k)$: STOP.
\item Compute $x^{k+1}$ as an $\varepsilon_k$-stationary point of
\begin{align*}
\min_{x\in\R^n} \mathcal{L}_{\rho_k}(x,u^k,v^k).
\end{align*}
\item Set
\begin{align}\label{algoMultiplierUpdate}
\begin{split}
\lambda_i^{k+1}&:=(u_i^k+\rho_kg_i(x^{k+1}))_+ \quad \forall i=1,\ldots,m, \\
\mu_j^{k+1}&:=v^k_j+\rho_kh_j(x^{k+1}) \quad \forall j=1,\ldots,p .
\end{split}
\end{align}
\item If either $k=0$ or the two conditions
\begin{align}\label{algoCaseOne}
\begin{split}
\|h(x^{k+1})\|&\leq \tau \|h(x^k)\| \quad \text{and} \\
\left\| \min\left\lbrace -g(x^{k+1}),\frac{u^k}{\rho_k}\right\rbrace \right\| &\leq \tau \left\| \min\left\lbrace -g(x^{k}),\frac{u^{k-1}}{\rho_{k-1}}\right\rbrace \right\|
\end{split}
\end{align}
hold, set $\rho_{k+1}:=\rho_k$, otherwise set $\rho_{k+1}:=\gamma \rho_k$.
\item Set $k\leftarrow k+1$ and go to (S.1)
\end{enumerate}
\end{algo}

The safeguarding mechanism, which is to use the bounded sequences $\{u^k\}, \{v^k\}$ in the augmented Lagrangian for unconstrained subproblems, yields better global convergence properties, as discussed in \cite{Kanzow_2017}.

The multiplier update in \eqref{algoMultiplierUpdate} is done such that the following relation between Lagrangian and augmented Lagrangian holds in each step.

\begin{lemma}\label{lagrangeEquivalence}
Let $k\in \N$. It holds
\begin{align*}
\partial_x \mathcal{L}(x^k,\lambda^{k},\mu^{k})=\partial_x\mathcal{L}_{\rho_{k-1}}(x^k,u^{k-1},v^{k-1})
\end{align*}
for the iterates of the safeguarded augmented Lagrangian method.
\end{lemma}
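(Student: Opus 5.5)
Both sides are limiting subdifferentials of $\varphi$ plus a continuously differentiable function, so the plan is to reduce the claim, via the sum rule, to an equality of two gradients. Write
\begin{align*}
\mathcal{L}(x,\lambda,\mu)&=\underbrace{f(x)+\lambda^Tg(x)+\mu^Th(x)}_{=:F_1(x)}+\varphi(x),\\
\mathcal{L}_{\rho}(x,u,v)&=\underbrace{f(x)+\tfrac{\rho}{2}\Bigl(\bigl\|\bigl(g(x)+\tfrac{u}{\rho}\bigr)_+\bigr\|^2+\bigl\|h(x)+\tfrac{v}{\rho}\bigr\|^2\Bigr)}_{=:F_2(x)}+\varphi(x).
\end{align*}
$F_1$ is $C^1$ by Assumption~\ref{assu:optProb}(i). $F_2$ is also $C^1$, because $t\mapsto \tfrac12 (t)_+^2$ is continuously differentiable with derivative $(t)_+$; the chain rule then gives $F_2\in C^1$. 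Since $\varphi$ is locally Lipschitz, hence continuous, proper and lsc, Theorem~\ref{thm:SumRuleMord}(b) applies to both functions. It yields
\begin{align*}
\partial_x\mathcal{L}(x^k,\lambda^k,\mu^k)=\nabla F_1(x^k)+\partial\varphi(x^k),\qquad \partial_x\mathcal{L}_{\rho_{k-1}}(x^k,u^{k-1},v^{k-1})=\nabla F_2(x^k)+\partial\varphi(x^k).
\end{align*}
It therefore suffices to show $\nabla F_1(x^k)=\nabla F_2(x^k)$, with $\lambda=\lambda^k$, $\mu=\mu^k$ in $F_1$ and $\rho=\rho_{k-1}$, $u=u^{k-1}$, $v=v^{k-1}$ in $F_2$.

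Next, compute $\nabla F_2$ explicitly. Using $\tfrac{d}{dt}\tfrac{\rho}{2}(t+u_i/\rho)_+^2=\rho(t+u_i/\rho)_+=(u_i+\rho t)_+$, we get
\begin{align*}
\nabla F_2(x^k)=\nabla f(x^k)+\sum_{i=1}^m\bigl(u_i^{k-1}+\rho_{k-1}g_i(x^k)\bigr)_+\nabla g_i(x^k)+\sum_{j=1}^p\bigl(v_j^{k-1}+\rho_{k-1}h_j(x^k)\bigr)\nabla h_j(x^k).
\end{align*}
By the multiplier update \eqref{algoMultiplierUpdate}, which produces $\lambda^{k},\mu^{k}$ from $x^{k}$, $u^{k-1}$, $v^{k-1}$ and $\rho_{k-1}$, the coefficients are exactly $\lambda_i^k$ and $\mu_j^k$. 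Hence $\nabla F_2(x^k)=\nabla f(x^k)+\sum_i\lambda_i^k\nabla g_i(x^k)+\sum_j\mu_j^k\nabla h_j(x^k)=\nabla F_1(x^k)$, which proves the lemma.

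The only point needing care is the continuous differentiability of the squared positive-part term, since the sum rule requires $C^1$ and not mere differentiability. This is handled by noting that $t\mapsto (t)_+$ is continuous, so $\tfrac12(t)_+^2$ is $C^1$. The index bookkeeping ($k$ versus $k-1$) must also match the update rule, which is why the statement is for $k\ge1$.
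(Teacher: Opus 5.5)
Your proposal is correct and follows the paper's proof essentially verbatim: you apply the sum rule for the limiting subdifferential to split off $\partial\varphi(x^k)$, compute the gradient of the smooth part of the augmented Lagrangian, and identify its coefficients with $\lambda_i^k$ and $\mu_j^k$ via the multiplier update. Your explicit check that $t\mapsto\tfrac12 (t)_+^2$ is continuously differentiable, which is what makes Theorem~\ref{thm:SumRuleMord}(b) applicable to the augmented Lagrangian, is a detail the paper leaves implicit.
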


\begin{proof}
By the sum rule for the Mordukhovich subdifferential and the continuous differentiability of $f,g,h$, we have
\begin{align*}
\partial_x^M\mathcal{L}_{\rho_{k-1}}(x^k,u^{k-1},v^{k-1})
&=\nabla f(x^k)+\partial_x \varphi(x^k) \\
&\quad+\sum_{i=1}^m(u_i^{k-1}+\rho_{k-1} g_i(x^k))_+ \nabla g_i(x^k) \\
&\quad+\sum_{j=1}^p(v_j^{k-1}+\rho_{k-1}h_j(x^k)) \nabla h_j(x^k) \\
&= \nabla f(x^k)+\partial_x \varphi(x^k)+\sum_{i=1}^m \lambda_i^{k} \nabla g_i(x^k)+\sum_{j=1}^p \mu_j^{k} \nabla h_j(x^k)\\
&=\partial_x \mathcal{L}(x^k,\lambda^{k},\mu^{k}),
\end{align*}
where the penultimate identity follows from the definitions of the updated
multipliers $ \lambda_i^k $ and $ \mu_j^k $ in Algorithm~\ref{algo:SALM}.
\end{proof}

In smooth constrained minimization problems, we are aiming for convergence of our algorithm to a point satisfying the KKT conditions. These conditions are necessary for local minima under a suitable constraint qualification, and are also algorithmically verifiable in a reasonable way. For our potentially non-smooth setting, the counterpart of the KKT conditions is the notion of M-stationarity that is applicable to our case. We do this by adapting the generalized definition of composite M-stationarity from \cite{De_Marchi_2023} to our explicit constraint setting, which leads to the following definition.

\begin{definition}
The triple $(x^*,\lambda^*,\mu^*)\in\R^n\times\R^m_+\times\R^p$ is called an \emph{M-stationary point} of problem \eqref{optProb} if the following 
conditions hold:
\begin{align*}
0&\in \nabla f(x^*)+\partial \varphi(x^*)+ \sum_{i=1}^m\lambda^*_i\nabla g_i(x^*)+\sum_{j=1}^p\mu^*_j\nabla h_j(x^*)\quad\text{(stationarity)}, \\
h(x^*)&=0 \quad\text{(equality feasibility)},\\
g(x^*)&\leq 0 \quad\text{(inequality feasibility)}, \\
(\lambda^*)^Tg(x^*)&=0\quad\text{(complementarity)}.
\end{align*}
\end{definition}

In our subsequent convergence analysis, we only consider the case that the SALM Algorithm runs indefinitely and produces an infinite series of iterates. In practice, we want to employ a termination criterion in (S.1) that checks for (approximate) stationarity, for more details we refer to Section \ref{sec:numerical}.

Ensuring the boundedness of the sequences generated by SALM is already sufficient for convergence to an M-stationary point on a subsequence, as stated in the following theorem.

\begin{theorem}\label{thm:bounded->KKT}
Let $ \big\{ (x^k, \lambda^k, \mu^k) \big\}$ be an infinite sequence generated by Algorithm \ref{algo:SALM}. Then every accumulation point of this sequence is an M-stationary
point of the program \eqref{optProb}.
\end{theorem}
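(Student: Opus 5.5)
Let $(x^*,\lambda^*,\mu^*)$ be an accumulation point, and fix an index set $K\subseteq\N$ with $(x^{k},\lambda^{k},\mu^{k})\to_K(x^*,\lambda^*,\mu^*)$. Since $\lambda^k\geq 0$ for all $k$, we have $\lambda^*\in\R^m_+$. Throughout I keep the indexing of Algorithm~\ref{algo:SALM}: $x^k$ is computed in step $k-1$ with penalty $\rho_{k-1}$ and safeguards $u^{k-1},v^{k-1}$, and $\lambda^k,\mu^k$ are formed from these same quantities. I would verify the four conditions of M-stationarity separately.

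\emph{Stationarity.} By step (S.2), $x^k$ is $\varepsilon_{k-1}$-stationary for $\mathcal{L}_{\rho_{k-1}}(\cdot,u^{k-1},v^{k-1})$. Hence there is $\eta^k$ in this subdifferential with $\|\eta^k\|\leq\varepsilon_{k-1}$. Lemma~\ref{lagrangeEquivalence}, together with the sum rule (Theorem~\ref{thm:SumRuleMord}), gives some $\xi^k\in\partial\varphi(x^k)$ with
\begin{align*}
\eta^k=\nabla f(x^k)+\xi^k+\sum_{i=1}^m\lambda_i^k\nabla g_i(x^k)+\sum_{j=1}^p\mu_j^k\nabla h_j(x^k).
\end{align*}
Solve this for $\xi^k$. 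Along $K$ the right-hand side converges, by continuity of the gradients and convergence of the multipliers, so
\begin{align*}
\xi^k\to_K\xi^*:=-\nabla f(x^*)-\sum_{i=1}^m\lambda^*_i\nabla g_i(x^*)-\sum_{j=1}^p\mu^*_j\nabla h_j(x^*).
\end{align*}
Because $\varphi$ is continuous, $x^k\to^\varphi x^*$. Theorem~\ref{thm:robustnessMord} then yields $\xi^*\in\partial\varphi(x^*)$, which is exactly the stationarity condition.

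\emph{Feasibility and complementarity.} I would split into two cases according to the behaviour of $\{\rho_k\}$.

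Case 1: $\{\rho_k\}$ is bounded. Then $\rho_k=\bar\rho$ for all large $k$, so \eqref{algoCaseOne} holds for all large $k$. This gives $\|h(x^k)\|\to0$ and $w^k:=\min\{-g(x^k),u^{k-1}/\bar\rho\}\to 0$ on the whole sequence, and in particular $h(x^*)=0$. Fix $i$. If $g_i(x^*)>0$, then the first argument tends to $-g_i(x^*)<0$ while the second is nonnegative, so $w^k_i\to -g_i(x^*)\neq0$, a contradiction; hence $g(x^*)\leq 0$. If $g_i(x^*)<0$, then $w^k_i\to0$ forces $u_i^{k-1}\to_K0$. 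Therefore $\lambda_i^k=(u_i^{k-1}+\bar\rho g_i(x^k))_+=0$ for large $k\in K$, so $\lambda_i^*=0$ and complementarity holds.

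Case 2: $\rho_k\to\infty$. Divide the multiplier updates by $\rho_{k-1}$:
\begin{align*}
\frac{\mu^k}{\rho_{k-1}}=\frac{v^{k-1}}{\rho_{k-1}}+h(x^k),\qquad \frac{\lambda_i^k}{\rho_{k-1}}=\Big(\frac{u_i^{k-1}}{\rho_{k-1}}+g_i(x^k)\Big)_+ .
\end{align*}
Along $K$ the left-hand sides tend to $0$, since the multipliers converge there, and the safeguards $u^{k-1},v^{k-1}$ are bounded. This gives $h(x^*)=0$ and $(g_i(x^*))_+=0$, i.e.\ feasibility. If $g_i(x^*)<0$, then $u_i^{k-1}+\rho_{k-1}g_i(x^k)\to-\infty$ along $K$. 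So $\lambda_i^k=0$ eventually, hence $\lambda^*_i=0$ and complementarity holds.

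\emph{Main obstacle.} The delicate points are the index shift between $x^k$ and the data $(\rho_{k-1},u^{k-1},v^{k-1})$, and the bounded-penalty case. There the conditions \eqref{algoCaseOne} control only the quantity $\min\{-g,u/\rho\}$, not $g$ and $\lambda$ directly, so feasibility and complementarity have to be extracted from it as above. Stationarity itself is routine given Theorem~\ref{thm:robustnessMord}. Its only requirement is $\varphi$-attentive convergence, which is automatic because $\varphi$ is continuous by Assumption~\ref{assu:optProb}.
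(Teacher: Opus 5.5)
Your proof is correct and follows essentially the same route as the paper: it uses the same case split on $\{\rho_k\}$ for feasibility and complementarity, and the same stationarity argument via Lemma~\ref{lagrangeEquivalence} and the robustness of the limiting subdifferential. The only differences are minor and in your favour: you divide the multiplier update by $\rho_{k-1}$ instead of bounding $\rho_{k-1}g_i(x^k)_+\le\lambda_i^k$, which is equivalent; you state explicitly that the robustness theorem needs $\varphi$-attentive convergence, which the paper leaves implicit; and you note that \eqref{algoCaseOne} holds for all large $k$ once $\rho_k$ is constant.
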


\begin{proof}
Let $ (x^*, \lambda, \mu) $ be an accumulation point of the sequence 
$ \big\{ (x^k, \lambda^k, \mu^k) \big\}$, and let 
$ \big\{ (x^k, \lambda^k, \mu^k) \big\}_K $ be a subsequence converging
to this accumulation point. We proceed by showing that the M-stationarity 
conditions hold in $(x^*,\lambda,\mu)$.

To this end, we begin with the feasibility of $ x^* $. Taking the multiplier 
update \eqref{algoMultiplierUpdate} and rearranging, we obtain
\begin{align*}
\rho_{k-1}g_i(x^k)_+\leq (u_i^{k-1}+\rho_{k-1}g_i(x^k))_+= \lambda_i^k
\end{align*}
for $i=1,\ldots,m$
and
\begin{align*}
\rho_{k-1}|h_j(x^k)|&=\sign(h_j(x^k))\rho_{k-1}h_j(x^{k}) \\
&= \sign(h_j(x^k))(\mu^k_j-v_j^{k-1}) \\
& \leq  |\mu_j^k| + \max \big\{ |v_{\min}|, | v_{\max} | \big\}
\end{align*}
for $j=1,\ldots,p$. By the boundedness of the multiplier sequences on $K$, 
the right-hand sides of the respective equations are bounded for $k\in K$, hence there exists a constant $c>0$ such that
\begin{align}\label{boundedByM}
\rho_{k-1}g_i(x^k)_+\leq c, \quad \rho_{k-1}|h_j(x^k)| \leq c
\end{align}
for all $i=1,\ldots,m$, $j=1,\ldots,p$, $k\in K$.

We first assume that the sequence $ \{ \rho_k \} $ is unbounded. Due to the
updating of the penalty parameter, this already implies $ \{ \rho_k \} \to 
\infty $. In particular, we therefore have $\lim_{k\in K}\rho_{k-1}=\infty$. 
Then \eqref{boundedByM} yields
\begin{align*}
\lim_{k\in K}g_i(x^k)_+=0  \quad\text{and}\quad \lim_{k\in K}|h_j(x^k)|=0
\end{align*}
for all $i=1,\ldots,m$ and $j=1,\ldots,p$. Since $\lim_{k\in K}x^k=x^*$ and $g,h$ are continuous, we therefore obtain $g(x^*)\leq 0$ and $h(x^*)=0$, 
which immediately shows feasibility of $x^*$.

On the other hand, if $ \{ \rho_k \} $ is bounded, this sequence is eventually
constant. Thus the condition \eqref{algoCaseOne} cannot be violated infinitely 
many times, and in infinitely many iterations we have a decrease of at least $\tau\in(0,1)$. This immediately yields $\lim_{k\to\infty}\|h(x^k)\|= 0$ and thus $h(x^*)=0$. Similarly, we obtain
\begin{align}\label{VkToZero}
\lim_{k\to\infty}\min\left\lbrace -g(x^k),\frac{u^{k-1}}{\rho_{k-1}}\right\rbrace=0.
\end{align}
Assume there exists an index $i\in\{1,\ldots,m\}$ with $g_i(x^*)>0$. Then it holds
\begin{align*}
\min\left\lbrace -g_i(x^k),\frac{u_i^{k-1}}{\rho_{k-1}}\right\rbrace\leq -g_i(x^k)\leq -\frac{1}{2}g_i(x^*)<0
\end{align*}
for $k$ sufficiently large, which produces a contradiction by taking the limit $k\to\infty$. Thus we must have $g_i(x^*)\leq 0$ for all $=1,\ldots,m$, and feasibility of $x^*$ holds also in this case.

We next verify the stationarity of the limit $ (x^*, \lambda, \mu) $.
In Step 2 of the SALM, we compute $x^{k}$ as an $\varepsilon_k$-stationary point of the augmented Lagrangian, hence there exists an $\eta_k\in \partial_x \mathcal{L}_{\rho_{k-1}}(x^k,u^{k-1},v^{k-1})$ with $\|\eta_k\|\leq \varepsilon_k$.  By Lemma \ref{lagrangeEquivalence} we have $\eta_k\in \partial_x\mathcal{L}(x^k,\lambda^k,\mu^k)$. As all functions except $\varphi$ in the Lagrangian function are continuously differentiable, this is equivalent to the existence of a $z^k\in \partial_x \varphi(x^k)$ with
\begin{align*}
\left\|\nabla f(x^k)+z^k+\sum_{i=1}^m \lambda_i^k\nabla g_i(x^k)+\sum_{j=1}^p \mu_j^k \nabla h_j(x^k)\right\|\leq \varepsilon_k.
\end{align*}
We now define
\begin{equation}\label{eq:defEk}
 E^k:=\nabla f(x^k)+z^k+\sum_{i=1}^m \lambda_i^k\nabla g_i(x^k)+\sum_{j=1}^p \mu_j^k \nabla h_j(x^k)
\end{equation}
and
$z^*:=-\nabla f(x^*)-\sum_{i=1}^m \lambda_i\nabla g_i(x^*)-\sum_{j=1}^p\mu_j\nabla h_j(x^*)$. Then
\begin{align*}
\|z^k-z^*\|&=\left\|z^k+\nabla f(x^k)+\sum_{i=1}^m \lambda_i^k\nabla g_i(x^k)+\sum_{j=1}^p \mu_j^k \nabla h_j(x^k)\right. \\
&\quad -\nabla f(x^k)-\sum_{i=1}^m \lambda_i^k\nabla g_i(x^k)-\sum_{j=1}^p \mu_j^k \nabla h_j(x^k) \\
&\quad \left.+\nabla f(x^*)+\sum_{i=1}^m \lambda_i\nabla g_i(x^*)+\sum_{j=1}^p\mu_j\nabla h_j(x^*)\right\| \\
&\leq  \left\|E^k\right\| +\|\nabla f(x^*)-\nabla f(x^k)\| \\
&\quad +\left\|\sum_{i=1}^m\lambda_i \nabla g_i(x^*)-\sum_{i=1}^m \lambda^k_i \nabla g_i(x^k)\right\|+\left\|\sum_{j=1}^p \mu_j \nabla h_j(x^*)-\sum_{j=1}^p \mu_i^k \nabla h_j(x^k)\right\| \\
&\leq \varepsilon_k+\|\nabla f(x^*)-\nabla f(x^k)\| \\
&\quad +\left\|\sum_{i=1}^m\lambda_i \nabla g_i(x^*)-\sum_{i=1}^m \lambda^k_i \nabla g_i(x^k)\right\|+\left\|\sum_{j=1}^p \mu_j \nabla h_j(x^*)-\sum_{j=1}^p \mu_i^k \nabla h_j(x^k)\right\|
\end{align*}
Taking the limit $k\to\infty$, $k\in K$ and using $\varepsilon_k\searrow 0$ as well as the continuous differentiability of $f,g,h$, the right-hand side vanishes, and we have $\lim_{k\to\infty,k\in K}z^k =z^*$. By the robustness of the Mordukhovich subdifferential, this shows $z^*\in \partial \varphi(x^*)$ and thus
\begin{align*}
0\in \nabla f(x^*)+\partial \varphi(x^*)+ \sum_{i=1}^m\lambda_i\nabla g_i(x^*)+\sum_{j=1}^p\mu_j\nabla h_j(x^*),
\end{align*}
so that the triple $ (x^*, \lambda, \mu) $ satisfies the stationarity condition.

We finally establish satisfaction of the complementarity property.
To this end, consider an arbitrary index $i\in\{1,\ldots,m\}$. If 
$g_i(x^*)=0$, we are already finished. Hence, let $g_i(x^*)<0$. If 
$\lim_{k \to \infty}\rho_{k-1}=\infty$, we obtain by the boundedness of $\{u^k\}$ 
\begin{align*}
\lim_{k\in K}u^{k-1}_i+\rho_{k-1} g_i(x^{k})=-\infty.
\end{align*}
In particular, for $k\in K$ large enough, we have
\begin{align*}
\lambda_i^k=(u^{k-1}_i+\rho_{k-1} g_i(x^{k}))_+=0,
\end{align*}
and thus $\lambda_i=0$ in the limit $k\in K$. We now assume that $\rho_{k}$ is bounded, hence, eventually constant. By the same reasoning that led to \eqref{VkToZero} and $g_i(x^*)<0$, we must have
\begin{align*}
\min\left\lbrace -g_i(x^k),\frac{u_i^{k-1}}{\rho_{k-1}}\right\rbrace = \frac{u_i^{k-1}}{\rho_{k-1}}
\end{align*}
for sufficiently large $k\in K$. As $\{\rho_{k-1}\}$ is eventually constant, 
taking the limit on $ K $ yields $\lim_{k\in K}u_i^{k-1}=0$. For all $k\in K$ sufficiently large, we therefore have
\begin{align*}
u_i^{k-1}+\rho_{k-1}g_i(x^k)<0
\end{align*}
and thus by \eqref{algoMultiplierUpdate}
\begin{align*}
\lambda_i^{k}= (u_i^{k-1}+\rho_{k-1}g_i(x^k))_+=0.
\end{align*}
In the limit $k\in K$, we obtain $\lambda_i=0$. Hence, for all $i=1,\ldots,m$, 
we have shown $\lambda_i g_i(x^*)=0$.
\end{proof}

Note that in the proof we did not have to rely on the Lipschitz-continuity of $\varphi$, hence this result is also valid if $\varphi$ is merely continuous.
However, we need the Lipschitz property of $\varphi$ to establish sufficient conditions for multiplier boundedness, and this is also necessary in the sense that applications of SALM on non-Lipschitz functions can lead to unbounded multipliers, as discussed in Section \ref{sect:counterex}.

We can ensure multiplier boundedness if certain constraint qualifications hold in the limit point $x^*$, which are taken from \cite{Andreani_2025} and defined as follows.

\begin{definition}
We define the infeasibility measure $\Phi:\R^n\to\R_+$ as
\begin{align}\label{eq:infeasMeasure}
\Phi(x):=\frac{1}{2}(\|g(x)_+\|^2+\|h(x)\|^2).
\end{align}
We say that $\Phi$ meets the \emph{Polyak-\L{}ojasiewicz (P\L{}) inequality} at $x^*\in\Omega$ when there are constants $\nu>0$ and $\delta>0$ such that
\begin{align}
\sqrt{\Phi(x)}\leq \nu\|\nabla \Phi(x)\| \quad \forall x\in B_\delta(x^*).
\end{align}
We call $\nu$ a \emph{Polyak-\L{}ojasiewicz constant} at $x^*$. If $\Phi$ satisfies the P\L{} inequality at $x^*$ we say that the \emph{Polyak-\L{}ojasiewicz constraint qualification (P\L{}CQ)} holds at $x^*$.
\end{definition}

This type of inequality was first studied by Polyak in \cite{Polyak_1963}, and was identified as a special instance of the \L{}ojasiewicz inequality \cite{zbMATH03371284}. Different to our application here, it was formulated for an optimality measure $\Theta(x):=f(x)-f^*$, where $f^*$ is the optimal value of an unconstrained minimization problem with objective $f$. This is why, since its introduction, the inequality has been mainly used to prove convergence rate results for unconstrained optimization methods like gradient descent and 
proximal gradient methods, see also \cite{AttouchBolteRedontSoubeyran2010,Karimi_2016}.

In the approach by Andreani et al.\ \cite{Andreani_2025}, 
the P\L{} inequality for the infeasibility measure $\Phi$ as a constraint qualification yields a sufficient condition for multiplier boundedness. Additionally, Andreani et al. show in \cite[Theorem 3.2, Theorem 4.1]{Andreani_2025} that, in the case of the constraint functions $g,h$ being also locally Lipschitz continuous, the P\L{}CQ is equivalent to their own Relaxed Quasi-Normality (RQN) constraint qualification from \cite{Andreani2024} as well as the commonly used error bound constraint qualification (EB). The latter is defined as follows.

\begin{definition}
We say that a feasible $x^*\in \Omega$ satisfies the \emph{Error Bound (EB) constraint qualification} or \emph{Error Bound condition} if there exist constants $\kappa>0$ and $\delta >0$ such that
\begin{align*}
\dist (x,\Omega)\leq \kappa \sqrt{\Phi(x)} \quad \forall x\in B_\delta(x^*)
\end{align*}
where $\kappa$ is called the \emph{local error bound constant} for $\Omega$ at $x^*$ relative to the infeasibility measure $\Phi$.
\end{definition}

The error bound condition was first introduced in \cite{Hoffman1952OnAS} for systems of linear inequalities. Similar to the P\L{}CQ, it has often been used in unconstrained minimization to connect an optimality measure in a point $x$ to the distance of $x$ to a set of minimizers $S$ in order to obtain results about convergence rates (cf. \cite{Drusvyatskiy_2018,Necoara_2018}). However, there are also instances in the literature where the EB condition is applied to an infeasibility measure and the distance to the feasible set, like in \cite{Pang_1997}. In the latter, the EB condition is used to derive an exact penalty result for a constrained problem.

In the following, we show that if the P\L{}CQ holds in an accumulation point of a SALM sequence, we obtain multiplier boundedness. In the case of locally Lipschitz gradients of the constraint functions, this result can also be extended to the equivalent EB and RQN constraint qualifications. In the latter setting, we also note that on the contrary, if any of the three equivalent CQs is violated, SALM can produce unbounded multiplier sequences even for completely smooth objective functions, as shown in \cite[Theorem 3.3]{Andreani_2025}, which is why these CQs can be seen as the minimally restrictive ones for ensuring multiplier boundedness.

\begin{theorem}\label{thm:lagrangeBounded}
Let $x^*$ be feasible and assume that $g,h$ are continously differentiable. Let additionally $\varphi$ be locally Lipschitz continuous in a neighborhood of $x^*$. If the P\L{}CQ holds at $x^*$ and there exists an infinite subset $K\subseteq \N$ such that the sequence $\{x^k\}$ generated by the SALM converges on $K$ to $x^*$, then the associated dual subsequences $\{\lambda^k\}_{k\in K}$ and $\{\mu^k\}_{k\in K}$ are bounded.
\end{theorem}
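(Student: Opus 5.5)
We argue by contradiction, following the strategy of \cite{Andreani_2025}. Suppose that $M_k:=\|(\lambda^k,\mu^k)\|$ is unbounded on $K$; after passing to a further subsequence we may take $M_k\to\infty$ for $k\in K$. First we observe that then $\rho_{k-1}\to\infty$. Indeed, if $\{\rho_k\}$ were bounded, the multiplier update \eqref{algoMultiplierUpdate} together with the boundedness of $\{u^k\},\{v^k\}$ and the continuity of $g,h$ near $x^*$ would give bounded $\lambda^k,\mu^k$ on $K$. Next, for $k\in K$ large, the proof of Theorem~\ref{thm:bounded->KKT} provides $z^k\in\partial\varphi(x^k)$ and $E^k$ as in \eqref{eq:defEk} with $\|E^k\|\le\varepsilon_k$. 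Since $\varphi$ is locally Lipschitz around $x^*$ with some constant $L$ and $x^k\to x^*$, Proposition~\ref{prop:LipschitzSubdiffBounded} gives $\|z^k\|\le L$. Hence
\[
\Big\|\sum_{i}\lambda_i^k\nabla g_i(x^k)+\sum_j\mu_j^k\nabla h_j(x^k)\Big\|\le \varepsilon_k+\|\nabla f(x^k)\|+L\le C
\]
for all large $k\in K$. This is the only place where the Lipschitz assumption enters, and it is precisely what fails in the counterexample.

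The next step is to relate these multiplier combinations to $\nabla\Phi(x^k)=\sum_i g_i(x^k)_+\nabla g_i(x^k)+\sum_j h_j(x^k)\nabla h_j(x^k)$. We have $\mu^k=v^{k-1}+\rho_{k-1}h(x^k)$, so $\rho_{k-1}h(x^k)=\mu^k-v^{k-1}$. For the inequalities, $\lambda_i^k=(u_i^{k-1}+\rho_{k-1}g_i(x^k))_+$ and $\rho_{k-1}g_i(x^k)_+$ differ by at most $u_{\max}$; check the cases $g_i(x^k)\ge0$ and $g_i(x^k)<0$ separately. Therefore
\[
\rho_{k-1}\nabla\Phi(x^k)=\sum_i\lambda_i^k\nabla g_i(x^k)+\sum_j\mu_j^k\nabla h_j(x^k)+r^k,
\]
where $\|r^k\|$ is bounded because $u,v$ are bounded and $\nabla g,\nabla h$ are bounded near $x^*$. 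Consequently $\rho_{k-1}\|\nabla\Phi(x^k)\|\le C'$ on $K$.

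Now we apply the P\L{}CQ. For large $k\in K$ we have $x^k\in B_\delta(x^*)$, so
\[
\rho_{k-1}\sqrt{\Phi(x^k)}\le\nu\rho_{k-1}\|\nabla\Phi(x^k)\|\le\nu C'.
\]
Since $\sqrt{\Phi}\ge\frac{1}{2}\max\{\|g_+\|,\|h\|\}$ up to constants, this yields that $\rho_{k-1}\|g(x^k)_+\|$ and $\rho_{k-1}\|h(x^k)\|$ are bounded. Then $\|\mu^k\|\le\|v^{k-1}\|+\rho_{k-1}\|h(x^k)\|$ is bounded, and
\[
0\le\lambda_i^k\le u_i^{k-1}+\rho_{k-1}g_i(x^k)_+
\]
is bounded as well. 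This contradicts $M_k\to\infty$. In fact the argument never needs the contradiction framing: the bounds hold directly whenever $\rho_{k-1}\to\infty$, and the case of bounded $\rho$ is handled as above.

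The main obstacle is the bookkeeping between $\lambda^k$ and $\rho_{k-1}g(x^k)_+$ in the second step. This requires the careful casewise estimate $|\lambda_i^k-\rho_{k-1}g_i(x^k)_+|\le u_i^{k-1}$, which I expect to verify by distinguishing $g_i(x^k)\ge0$ from $g_i(x^k)<0$. Some care is also needed to ensure that the P\L{} inequality is applied only at points within its neighborhood, which holds for $k\in K$ large.
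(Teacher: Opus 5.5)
Your proof is correct and follows essentially the same route as the paper. The Lipschitz bound $\|z^k\|\le L$ controls $E^k-\nabla f(x^k)-z^k$, the estimate $|\lambda_i^k-\rho_{k-1}g_i(x^k)_+|\le u_i^{k-1}$ turns this into a bound on $\rho_{k-1}\|\nabla\Phi(x^k)\|$, and the P\L{} inequality then bounds $\rho_{k-1}\sqrt{\Phi(x^k)}$ and hence the multipliers. The contradiction framing and the case split on $\{\rho_k\}$ are unnecessary, as you note yourself, since the direct chain of estimates never uses $\rho_{k-1}\to\infty$.
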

\begin{proof}
Let $\{x^k\}\subseteq \R^n$ and $\{\rho_k\}\subseteq \R_{++}$ be infinite sequences generated by the safeguarded augmented Lagrangian method. By the definition of the algorithm and similar reasoning as in the proof of Theorem \ref{thm:bounded->KKT}, we have that there exists $z^k\in\partial \varphi(x^k)$ and $\{E^k\}$ as defined in \eqref{eq:defEk} such that
\begin{align}\label{eq:lagrangeBounded:eps-stationarity}
\|E^k\|\leq \varepsilon_k
\end{align}
for all $k\in K$. By $\varepsilon_k\searrow 0$, we obtain $\lim_{k\to\infty}E^k=0$. As $\varphi$ is locally Lipschitz at $x^*$, there exist $L>0$ and $\delta>0$ 
such that
\begin{align*}
|\varphi(x)-\varphi(y)|\leq L\|x-y\|
\end{align*}
for all $x,y\in B_\delta(x^*)$. Proposition \ref{prop:LipschitzSubdiffBounded} now implies $\|z^k\|\leq L$ for all $k\in \K$ sufficiently large. This again implies by the above results and the continuous differentiability of $f$ that
\begin{equation}
\|\nabla f(x^k)+z^k-E^k\|\leq \|\nabla f(x^k)\| + \|z^k\|+\|E^k\|\leq \|\nabla f(x^*)\|+L+1
\end{equation}
for $k\in K$ sufficiently large. We define $M_1:=\max\{u_{\max},|v_{\min}|,|v_{\max}|\}$. By continuous differentiability of $g,h$, there exists a constant $M_2>0$ such that
\begin{align*}
\|\nabla g_i(x^k)\|\leq M_2 \text{ and }\|\nabla h_j(x^k)\|\leq M_2
\end{align*}
for all $i=1,\ldots ,m$ and $j=1,\ldots,p$ as well as $k\in K$ sufficiently large.
Altogether we have
\begin{align*}
\rho_{k-1}\|\nabla \Phi(x^k)\|&=\left\|\sum_{i=1}^m \rho_{k-1}(g_i(x^k))_+\nabla g_i(x^k)+\sum_{j=1}^p \rho_{k-1}h_j(x^k)\nabla h_j(x^k)\right\| \\
&= \left\|\sum_{i=1}^m [\rho_{k-1}(g_i(x^k))_+ -\lambda_i^k]\nabla g_i(x^k) \right.\\
 &\quad \left. +\sum_{j=1}^p [\rho_{k-1}h_j(x^k)-\mu_j^k]\nabla h_j(x^k)+E^k-\nabla f(x^k)-z^k\right\| \\
&= \left\|\sum_{i=1}^m [\rho_{k-1}(g_i(x^k))_+ -(\rho_{k-1}g_i(x^k)+u_{i}^{k-1})_+]\nabla g_i(x^k) \right.\\
 &\quad \left. +\sum_{j=1}^p [\rho_{k-1}h_j(x^k)-(\rho_{k-1}h_j(x^k)+v_j^{k-1})]\nabla h_j(x^k)+E^k-\nabla f(x^k)-z^k\right\| \\
&\leq  \sum_{i=1}^m u_{i}^{k-1}\|\nabla g_i(x^k)\|+\sum_{j=1}^p |v_j^{k-1}|\|\nabla h_j(x^k)\|+\|E^k-\nabla f(x^k)-z^k\| \\
&\leq \sum_{i=1}^m M_1M_2 +\sum_{j=1}^p M_1M_2 + \|\nabla f(x^*)\|+L+1 \\
&= (m+p)M_1M_2 +\|\nabla f(x^*)\|+L+1=:S
\end{align*}
for sufficiently large $k\in K$, where the first inequality follows from 
the nonexpansiveness of the projection operator. By assumption, the 
P\L{}-inequality holds in $x^*$. By combining it with the above inequality,
we obtain
\begin{align*}
\rho_{k-1}\sqrt{\Phi(x^k)} \leq \nu \rho_{k-1}\|\nabla \Phi(x^k)\|\leq \nu S
\end{align*}
for $k \in K $ sufficiently large, where $ \nu > 0 $ denotes the 
P\L\ constant. For all $k\in K$ large enough, we have
\begin{align*}
\|\lambda^k\|&=\|(u^{k-1}+\rho_{k-1}g(x^k))_+\| \\
&\leq \|(u^{k-1}+\rho_{k-1}g(x^k))_+ -\rho_{k-1} g(x^k)_+\| +\rho_{k-1}\|g(x^k)_+\| \\
&\leq \|u^{k-1}\|+\rho_{k-1}\sqrt{2\Phi(x^k)} \\
&\leq \sqrt{m}M_1+\sqrt{2}\nu S
\end{align*}
and, analogously,
\begin{align*}
\|\mu^k\|&=\|v^{k-1}+\rho_{k-1}h(x^k)\| \\
&\leq \|v^{k-1}\| +\rho_{k-1}\|h(x^k)\| \\
&\leq \|v^{k-1}\|+\rho_{k-1}\sqrt{2\Phi(x^k)} \\
&\leq \sqrt{p}M_1+\sqrt{2}\nu S.
\end{align*}
This shows the boundedness of the multiplier sequences $\{\lambda^k\}$ and $\{\mu^k\}$ on $K$.
\end{proof}

Through our convergence analysis, we implicitly also proved another property of the SALM iterates. Andreani et al. define the CAKKT property (Complementary Approximate KKT) in \cite{Andreani_2010} as a necessary optimality condition for smooth problems, which can be readily extended to non-smooth problems in the following way.

\begin{definition}
$x^*\in \R^n$ fulfills \emph{CAM-stationarity} (Complementarity Approximate M-stationarity), if $x^*$ is feasible and there exists sequences $\{x^k\},\{\eta^k\}\subseteq \R^n$, $\{\lambda^k\}\subseteq \R^m_+$ and $\{\mu^k\}\subseteq \R^p$  with $\lambda_i^k\geq 0$ for all $i\in I(x^*)$ and $\lambda_i^k=0$ for all $i\notin I(x^*)$ such that the following conditions are fulfilled.
\begin{enumerate}[label = (\alph*)]
 \item $x^k\to x^*$.
 \item For all $k\in \N$ we have $\eta^k\in \nabla f(x^k)+\partial \varphi(x^k)+\sum_{i=1}^m \lambda^k_i\nabla g_i(x^k)+\sum_{j=1}^p \mu_j^k \nabla h_j(x^k)$.
 \item $\|\eta^k\|\to 0$.
 \item $\lim_{k\to\infty}\sum_{i\in I(x^*)} |\lambda_i^kg_i(x^k)|+\sum_{j=1}^p |\mu_j^k h_j(x^k)|=0$.
\end{enumerate}
\end{definition}

A proof that CAM-stationarity is in fact a necessary optimality condition
for our non-smooth program \eqref{optProb} is given in the appendix, 
Theorem \ref{thm:CAMnecessary}.
Combining the previous results from this section, we obtain the following corollary.

\begin{corollary}
 Let $x^*$ be feasible, fulfilling P\L{}CQ and there exists an infinite subset $K\subseteq \N$ such that the sequence $\{x^k\}$ generated by the SALM converges on $\K$ to $x^*$. Then $x^*$ is CAM-stationary.
\end{corollary}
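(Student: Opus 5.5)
We take as the CAM sequence the SALM iterates themselves along $K$, with multipliers adjusted on inactive indices. Theorem~\ref{thm:lagrangeBounded} applies, since $\varphi$ is locally Lipschitz by Assumption~\ref{assu:optProb}, so $\{\lambda^k\}_K$ and $\{\mu^k\}_K$ are bounded. As in the proof of Theorem~\ref{thm:bounded->KKT}, the $\varepsilon_{k-1}$-stationarity together with Lemma~\ref{lagrangeEquivalence} gives $z^k\in\partial\varphi(x^k)$ such that $E^k$ from \eqref{eq:defEk} satisfies $\|E^k\|\le\varepsilon_{k-1}\to 0$. Conditions (a)--(c) are then nearly immediate, after one correction. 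For $i\notin I(x^*)$ we have $g_i(x^*)<0$, so $u_i^{k-1}+\rho_{k-1}g_i(x^k)<0$ for large $k\in K$ whenever $\rho_k\to\infty$ (as shown in Theorem~\ref{thm:bounded->KKT}), and hence $\lambda_i^k=0$ there.

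The bounded-$\rho$ case needs more care. In the proof of Theorem~\ref{thm:bounded->KKT} one only gets $u_i^{k-1}\to 0$, which may not force $\lambda_i^k=0$ exactly. Instead, I would define $\tilde\lambda^k_i:=\lambda^k_i$ for $i\in I(x^*)$ and $\tilde\lambda^k_i:=0$ otherwise, and set $\eta^k:=E^k-\sum_{i\notin I(x^*)}\lambda_i^k\nabla g_i(x^k)$. Then (b) holds with $(\tilde\lambda^k,\mu^k)$. For (c), I claim $\lambda_i^k\to 0$ on $K$ for inactive $i$ in every case. Indeed $\lambda_i^k=(u_i^{k-1}+\rho_{k-1}g_i(x^k))_+\le (u_i^{k-1}-\rho_{k-1}|g_i(x^*)|/2)_+$, which vanishes when $\rho\to\infty$ and tends to $0$ when $\rho$ is eventually constant and $u_i^{k-1}\to0$. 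Since $\nabla g_i(x^k)$ is bounded, $\eta^k\to0$.

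The main work is (d). Boundedness of the multipliers is not enough here, because we need $\lambda_i^k g_i(x^k)\to0$ and $\mu_j^k h_j(x^k)\to 0$, and $g_i(x^k),h_j(x^k)\to 0$ by feasibility of $x^*$ and continuity. Since $|\lambda_i^k g_i(x^k)|\le \|\lambda^k\|\,|g_i(x^k)|$ with $\{\lambda^k\}_K$ bounded by Theorem~\ref{thm:lagrangeBounded}, each term tends to zero, and likewise for the $\mu$ terms. So (d) follows from boundedness times a vanishing factor, and no P\L{} estimate beyond Theorem~\ref{thm:lagrangeBounded} is needed.

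The remaining technical point, and the step I expect to be the main obstacle, is that CAM-stationarity asks for sequences indexed by all $k\in\N$ converging to $x^*$. I would simply re-index the subsequence $K$ as a full sequence, using the objects $x^k,\eta^k,\tilde\lambda^k,\mu^k$ for $k\in K$ beyond the threshold where the estimates above hold. Feasibility of $x^*$ is part of the hypothesis. The delicate part is handling the inactive multipliers in the bounded-$\rho$ case, and the absorption into $\eta^k$ is what resolves it.
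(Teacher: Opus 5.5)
Your proof is correct and follows the paper's route. You use the SALM iterates along $K$, boundedness of $\{\lambda^k\}_K,\{\mu^k\}_K$ from Theorem~\ref{thm:lagrangeBounded}, property (d) from bounded multipliers times $g_i(x^k)\to 0$ for $i\in I(x^*)$ and $h_j(x^k)\to 0$, and the complementarity argument of Theorem~\ref{thm:bounded->KKT} for the inactive indices.

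Your worry about the bounded-$\rho$ case is unfounded, though. With $\rho_{k-1}=\bar\rho$ eventually and $u_i^{k-1}\to 0$ on $K$, one gets $u_i^{k-1}+\bar\rho\, g_i(x^k)\to\bar\rho\, g_i(x^*)<0$, so $\lambda_i^k=0$ exactly for large $k\in K$; your own bound $(u_i^{k-1}-\rho_{k-1}|g_i(x^*)|/2)_+$ is eventually zero. This is what the paper uses, so absorbing these terms into $\eta^k$ is valid but unnecessary.
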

\begin{proof}
 If we consider $\{x^k\}_{K}$, property (a) holds by assumption. By the SALM algorithm, we solve the subproblems as $\varepsilon_k$-stationary, which in turn implies the existence of a sequence $\{\eta^k\}\subseteq \R^n$ satisfying properties (b) and (c).
 Lastly, by Theorem~\ref{thm:lagrangeBounded}, the P\L{}CQ in $x^*$ guarantees that the correspoonding Lagrange multiplier subsequences generated by SALM stay bounded, which already implies (d). Also, by the reasoning from the complementarity section of the proof of Theorem \ref{thm:bounded->KKT}, we have $\lambda^k_i=0$ for $i\notin I(x^*)$ and $k\in K$ sufficiently large. Hence all requirements of the sequences $\{\lambda^k\}\subseteq \R^m_+$ and $\{\mu^k\}\subseteq \R^p$ are fulfilled by the SALM-multipliers.
\end{proof}

\section{On the Local Lipschitz Assumption of $\varphi$}
\label{sect:Lip}

For some applications of the SALM it would be desirable, not to demand the local Lipschitz property of $\varphi$ in Theorem \ref{thm:lagrangeBounded}. This issue arises for example when considering sparsity optimization: In order to obtain sparse solutions of a smooth optimization problem, it is popular to set $\varphi$ to a $\ell^0$-regularization-term and consider the composite problem. The $\ell^0$-pseudonorm is defined by first setting
\begin{align}\label{countingTerm}
|a|_0:=\begin{cases}
0&\text{ if }a=0, \\
1&\text{otherwise}
\end{cases}
\end{align}
for $a\in \R$ and then setting
\begin{align*}
\|x\|_0:=\sum_{i=1}^n|x_i|_0
\end{align*}
which is essentially counting the number of non-zero entries of the vector $x\in\R^n$. Note that despite the notation, this does not define a norm on $\R^n$. The function is also obviously not continuous in zero, and hence also not locally Lipschitz continuous there.
However, $\|\cdot\|_0$ is proper and lsc.

The following example shows that requiring $\varphi$ to be only proper and lsc does not necessarily lead to bounded Lagrange multipliers under the P\L{}CQ which would be required for the applicability of Theorem~\ref{thm:bounded->KKT}. The core observation is that without local Lipschitz continuity, we cannot rely on 
Lemma~\ref{prop:LipschitzSubdiffBounded} and may get arbitrarily large 
Mordukhovich subgradients, which in turn impact the Lagrange multipliers.

\subsection{Non-Lipschitz Example With Unbounded Multiplier Sequences}
\label{sect:counterex}

Let $f\equiv 0$ and $\varphi:\R\to\R$ be defined as (cf. Figure \ref{fig:counterex})
\begin{align*}
\varphi(x):=\begin{cases}
0 &\text{if }x=0\text{ or } x=\frac{1}{n}\text{ for }n\in\N, \\
1 &\text{else.}
\end{cases}
\end{align*}
\begin{figure}[h]
\centering
\begin{tikzpicture}
\begin{axis}[
    axis lines=middle,
    xmin=-0.05, xmax=1.05,
    ymin=-0.1, ymax=1.2,
    xtick={0,0.2,0.4,0.6,0.8,1},
    ytick={0,1},
]

\addplot[domain=0:1, thick] {1};

\addplot[only marks, mark=o] coordinates {(0,1)};
\addplot[only marks, mark=*] coordinates {(0,0)};

\foreach \n in {1,...,100} {
    \pgfmathsetmacro{\x}{1/\n}

    \addplot[only marks, mark=o] coordinates {(\x,1)};

    \addplot[only marks, mark=*] coordinates {(\x,0)};
}

\end{axis}
\end{tikzpicture}
\caption{Plot of $\varphi$}
\label{fig:counterex}
\end{figure}
Then $\varphi$ is proper and lsc. We also impose two equality constraints with $h_1(x)=x$ and $h_2(x)=x^2$. The resulting optimization problem has the following properties.

\begin{lemma}\label{lemma:counterex}
Consider the optimization problem
\begin{equation}\label{eq:counterexProblem}
\min_{x\in\R}\varphi(x)\quad\text{s.t.}\quad x=0,\,x^2=0.
\end{equation}
Then the following statements hold.
\begin{enumerate}[label = (\alph*)]
\item For all $n\in\N$, we have $\partial \varphi\left(\frac{1}{n}\right)=\R$.
\item P\L{}CQ holds in $x^*=0$.
\end{enumerate}
\end{lemma}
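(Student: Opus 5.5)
For part (a), the plan is to compute the Fréchet subdifferential of $\varphi$ at every point $\bar x = \frac1n$ and show that it is all of $\R$. Then the inclusion \eqref{eq:inclusionFrechetMord} gives $\partial\varphi(\frac1n)=\R$. Fix $v\in\R$. Since $\varphi\ge 0$ everywhere and $\varphi(\frac1n)=0$, we have $\varphi(x)-\varphi(\bar x)\ge 0$, but this alone does not control $-v(x-\bar x)$. The key observation is that $\frac1n$ is an isolated point of the set $\{0\}\cup\{\frac1k:k\in\N\}$. Hence there is $r>0$ such that $\varphi(x)=1$ for all $0<|x-\bar x|<r$. For such $x$ the difference quotient is
\[
\frac{1-v(x-\bar x)}{|x-\bar x|}\;\ge\;\frac{1}{|x-\bar x|}-|v| \;\to\; +\infty \quad (x\to\bar x),
\]
so the $\liminf$ is $+\infty\ge 0$, and $v\in\hat\partial\varphi(\bar x)$. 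Before this, I will briefly note that $\varphi$ is lsc. It takes values in $\{0,1\}$, and the set $\{\varphi=0\}=\{0\}\cup\{\frac1k\}$ is closed because its only accumulation point $0$ belongs to it. So the sublevel sets are closed, and $\varphi$ is proper and lsc as the definitions require.

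For part (b), I will compute $\Phi$ directly. With $h_1(x)=x$, $h_2(x)=x^2$ and no inequality constraints, \eqref{eq:infeasMeasure} gives $\Phi(x)=\frac12(x^2+x^4)$ and $\Phi'(x)=x+2x^3=x(1+2x^2)$. Then $\sqrt{\Phi(x)}=\frac{|x|}{\sqrt2}\sqrt{1+x^2}$ and $|\Phi'(x)|=|x|(1+2x^2)\ge|x|$. On $B_1(0)$ we have $\sqrt{1+x^2}\le\sqrt2$, so $\sqrt{\Phi(x)}\le |x|\le |\Phi'(x)|$. The PŁ inequality therefore holds at $x^*=0$ with $\nu=1$, $\delta=1$. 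Feasibility of $x^*=0$ is immediate.

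Neither step is a real obstacle. The only point needing care is the isolation argument in (a): the punctured neighborhood $0<|x-\bar x|<r$ must avoid all other points $\frac1k$, and the relevant constraint is the distance to the nearest neighbors $\frac1{n\pm1}$. Taking $r=\frac1n-\frac1{n+1}=\frac{1}{n(n+1)}$ suffices, because the next point above, $\frac1{n-1}$, is even farther away and $0$ is farther still.
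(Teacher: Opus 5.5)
Your proof is correct and is essentially the paper's argument. In (a), the isolation of $\frac1n$ forces the Fr\'echet quotient to be at least $\frac{1}{|x-\bar x|}-|v|\to+\infty$, so $\hat{\partial}\varphi(\frac1n)=\R\subseteq\partial\varphi(\frac1n)$. In (b), you compute directly with $\Phi(x)=\frac12(x^2+x^4)$ and $\Phi'(x)=x+2x^3$; the paper gets $\nu=\frac{1}{\sqrt2}$ from $x^2+x^4\le(x+2x^3)^2$, and your $\nu=1$ on $B_1(0)$ works just as well.

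Your radius $\frac{1}{n(n+1)}$ is in fact the correct one. The interval of radius $\frac{1}{n+1}$ written in the paper contains $\frac{1}{n+1}$ when $n\ge 2$, a harmless slip since only the isolation of $\frac1n$ is used.
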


\begin{proof}
(a) Let $n\in\N$. Then we have $\varphi\left(\frac{1}{n}\right)=0$ and
\begin{align}\label{eq:Umgebung1/n}
\varphi(x)=1\text{ for all }x\in \left(\frac{1}{n}-\frac{1}{n+1},\frac{1}{n}+\frac{1}{n+1}\right)\setminus\left\lbrace \frac{1}{n}\right\rbrace
\end{align}
Now let $a\in \R$. Let $\{x^k\}\subseteq \R$ be arbitrary with $x^k\to \overline{x}:=\frac{1}{n}$ and $ x^k \neq \overline{x} $. Then by \eqref{eq:Umgebung1/n}, we have for $k\in\N$ sufficiently large that
$\varphi(x^k)=1$ and thus
\begin{align*}
\frac{\varphi(x^k)-\varphi(\overline{x})-a(x^k-\overline{x})}{|x^k-\overline{x}|}\geq \frac{1-|a||x^k-\overline{x}|}{|x^k-\overline{x}|}=\frac{1}{|x^k-\overline{x}|}-|a|.
\end{align*}
We obtain
\begin{align*}
\liminf_{k\to\infty}\frac{\varphi(x^k)-\varphi(\overline{x})-a(x^k-\overline{x})}{|x^k-\overline{x}|}\geq \liminf_{k\to\infty}\frac{1}{|x^k-\overline{x}|}-|a|=+\infty\geq 0
\end{align*}
This shows
\begin{align*}
a\in \hat{\partial} \varphi\left(\frac{1}{n}\right)\subseteq \partial \varphi\left(\frac{1}{n}\right)
\end{align*}
and, as $a\in \R$ was chosen as arbitrary,
\begin{align*}
\partial \varphi\left(\frac{1}{n}\right)=\R
\end{align*}
which verifies statement (a).
\smallskip

\noindent
(b) We define $\delta :=1 $ and $\nu := \frac{1}{\sqrt{2}}$. For all $x\in \R$ with $|x|<\delta =1$ we have
\begin{align*}
\Phi(x)&=\frac{1}{2}\|h(x)\|^2 \\
&=\frac{1}{2}(x^2+x^4) \\
&\leq \frac{1}{2}(x^2+4x^4+4x^6) \\
&=\frac{1}{2}(x+2x^3)^2 \\
&=\frac{1}{2}\left(\begin{pmatrix}
x \\ x^2
\end{pmatrix}^T
\begin{pmatrix}
1 \\ 2x
\end{pmatrix}
\right)^2 \\
&=\frac{1}{2}(h(x)^T\nabla h(x))^2 \\
&=\nu^2(\Phi'(x))^2.
\end{align*}
This shows that P\L{}CQ holds in $x^*=0$.
\end{proof}

We now show that the analogon of Theorem~\ref{thm:lagrangeBounded} does not hold, i.e., there exists a sequence in $\R$ converging $\varphi$-attentively to the solution and only feasible point $x^*=0$ of \eqref{eq:counterexProblem} fulfilling P\L{}CQ, which has unbounded corresponding Lagrange multipliers $\{\mu^k\}$.

We define the sequence $\{x^k\}:=\frac{1}{k}$ and show that it is generated by SALM. We note that in our case the subdifferential of the augmented Lagrangian function at $x$ is given by
\begin{align*}
\partial \mathcal{L}_{\rho}(x,u,v)=\partial \varphi(x) + (v+\rho h(x))^T\nabla h(x).
\end{align*}
By Lemma~\ref{lemma:counterex} (a) we have $\partial\varphi(x^k)=\R$ for all $k\in\N$. Starting in $x^1=1$ we can in each step choose $z^k\in \partial \varphi(x^k)$ such that
\begin{align*}
z^k:=-(v_{k-1}+\rho_{k-1} h(x^k))^T\nabla h(x^k)
\end{align*}
and we get $0\in \partial \mathcal{L}_{\rho_{k-1}}(x^k,u_{k-1},v_{k-1})$ for all $k\in \N$, so $x^k$ is indeed a $\varepsilon_k$-stationary point
of the augmented Lagrangian function in each step and for each choice of $\varepsilon_k\searrow 0$.
Moreover, $x^k$ also converges $\varphi$-attentively to the feasible point $x^*=0$, as $\varphi(x^k)=0$ for all $k\in \N$.

We set the parameters $v^k=0$ for all $k\in\N$, $\rho_0=1$, $\tau = \frac{1}{2}$, $\gamma =2$.
In our specific case, we then have the multiplier update
\begin{align}\label{eq:counterexMultUpdate}
\mu^k=\rho_{k-1}\begin{pmatrix}
x^k \\ (x^k)^2
\end{pmatrix}
=\rho_{k-1}\begin{pmatrix}
\frac{1}{k} \\ \frac{1}{k^2}
\end{pmatrix} .
\end{align}
Additionally we have for $k\in\N$
\begin{align*}
\|h(x^k)\|^2=\frac{1}{k^2}+\frac{1}{k^4}
\end{align*}
and thus
\begin{align*}
\frac{\|h(x^{k+1})\|^2}{\|h(x^k)\|^2}=\frac{\frac{1}{(k+1)^2}+\frac{1}{(k+1)^4}}{\frac{1}{k^2}+\frac{1}{k^4}}=\frac{\frac{k^2}{(k+1)^2}+\frac{k^2}{(k+1)^4}}{1+\frac{1}{k^2}}\to 1
\end{align*}
for $k\to\infty$. This implies the existence of a $K\in \N$ with
\begin{align*}
\|h(x^{k+1})\|^2 > \frac{1}{4} \|h(x^k)\|^2
\end{align*}
for all $k\geq K$. The penalty update rule implies $\rho_{k}\geq 2^{k-K}$ for all $k\geq K$ and we obtain with \eqref{eq:counterexMultUpdate}
\begin{align*}
\|\mu^k\|=\rho_{k-1}\left\|
\begin{pmatrix}
\frac{1}{k} \\ \frac{1}{k^2}
\end{pmatrix}
\right\|\geq 2^{k-1-K}\frac{1}{k} \to \infty
\end{align*}
by taking $k$ sufficiently large. Thus, this example results in an unbounded multiplier sequence $\{\mu^k\}$.

\subsection{Locally Lipschitz Continuous Regularization Functions}

The above example illustrates that the convergence theory we established in this paper is not directly applicable to optimization problems with $\ell_0$-sparsity-regularization. However, there exist ways of circumventing this issue. One of them is to introduce surrogate sparsity functions with similar behavior as the $\ell_0$-pseudonorm, but with nicer analytical properties. A popular approach presented in \cite[Section 3.1.4]{CuiBook} is to approximate the expression in \eqref{countingTerm} by a function $\psi_\delta:\R\to [0,1]$, parameterized by a $\delta>0$, and then define the surrogate $\ell_0$-function by
\begin{align*}
\varphi(x):=\sum_{i=1}^n\psi_{\delta_i}(x_i).
\end{align*}
The approximation is done such that $\psi_\delta$ satisfies the following three conditions.
\begin{enumerate}[label = (R\arabic*)]
\item $\lim_{\delta \searrow 0}\psi_\delta(t)=|t|_0$ for any $t\in \R$ (pointwise).
\item For all $\delta>0$, $\psi_\delta$ is symmetric on $\R$ (i.e., $\psi_\delta(-t)=\psi_\delta(t)$ for all $t\geq 0$) as well as concave and non-decreasing on $[0,\infty)$.
\item $\psi_\delta(t)=1$ for all $t$ with $|t|\geq \delta$.
\end{enumerate}
In the following, we recall the definition of two popular surrogate functions
which satisfy the above three properties and which are, in addition,
Lipschitz continuous (even globally). They are visualized in 
Figure~\ref{fig:surrogateSparsity} and described in the following.
\begin{figure}
\centering
\begin{tikzpicture}
\begin{axis}[
    axis lines=middle,
    xlabel={$\,t$},
    ylabel={$\psi_{\delta}(t)$},
    xmin=-1.7, xmax=1.7,
    ymin=0, ymax=1,
    xtick distance =1,
    ytick distance =1,
    samples=400,
    domain=-2:2,
    width=11cm,
    height=7.5cm,
    legend style={at={(0.97,0.53)},anchor=north east}
]

\def\delta{1}
\def\a{2.5}      

\addplot[
    thick,
    blue
]
{
    abs(x) <= (\delta/\a) ?
        (2*\a*abs(x))/((\a +1)*\delta) :
    (abs(x) <= \delta ?
        1-((\delta-abs(x))^2)/((1-(1/(\a^2)))*\delta^2) :
        1
    )
};
\addlegendentry{SCAD}

\addplot[
    thick,
    dashed,
    red
]
{
    abs(x) <= \delta ?
        2*abs(x)/\delta - x^2/((\delta)^2) :
        1
};
\addlegendentry{MCP}

\end{axis}
\end{tikzpicture}
\caption{Two surrogate sparsity functions: MCP and SCAD (with $\delta =1$ and $a=2.5$)}
\label{fig:surrogateSparsity}
\end{figure}
\begin{example}\label{ex:MCP}
The \emph{minmax concave penalty (MCP)} family \cite{Zhang2010} is defined by
\begin{align*}
\psi^{\mathrm{MCP}}_\delta(t):=\begin{cases}
\frac{2|t|}{\delta}-\frac{t^2}{\delta^2}&\text{if }|t|\leq \delta, \\
1&\text{if }|t|> \delta.
\end{cases}
\end{align*}
\end{example}

Another possible choice, which was first introduced in \cite{Fan_2001}, is the SCAD family. The idea is similar to MCP, but the function is now split into three parts, namely a (piecewise) linear section in a neighborhood of zero, the constant part with value one for $|t|$ sufficiently large, and a quadratic part connecting the linear and constant sections while preserving continuous differentiability.
\begin{example}
The \emph{Smoothly Clipped Absolute Deviation (SCAD)} family parameterized by $a>2$ and $\delta > 0$ is defined by
\begin{align*}
\psi_{\delta,a}^{\mathrm{SCAD}}(t):=\begin{cases}
\frac{2a}{(a+1)\delta}|t| &\text{if }|t|\leq \frac{\delta}{a}, \\
1-\frac{(\delta-|t|)^2}{\left(1-\frac{1}{a^2}\right)\delta^2} &\text{if }\frac{\delta}{a}< |t| \leq \delta, \\
1 &\text{if }|t|\geq \delta.
\end{cases}
\end{align*}
\end{example}

We note that both functions are designed in a way such that they are continuously differentiable outside of zero, and even globally Lipschitz continuous on the
whole space $ \R $. Thus, both are a suitable choice for $\ell_0$-surrogates that fulfill Assumption \ref{assu:optProb} for our problem setting.

\section{Numerical Results}

\label{sec:numerical}

In this section we are testing the algorithm performance on a model problem and evaluate the relevant metrics. In order to create a flexible test environment tailored to our problem setting that is able to deal with a large range of problems, we implemented our own Python library.

\subsection{Implementation Details}

The implementation details of the SALM Algorithm~\ref{algo:SALM} are as follows: The sequences $\{u^k\}$ and $\{v^k\}$ are obtained in each step by projecting $\lambda^k$ and $\mu^k$ onto $[0,u_{\max}]^m$ and $[v_{\min},v_{\max}]^p$ respectively. The tolerance reduction for the sequence $\varepsilon_k$ is done in a similar way as in \emph{bazinga.jl}\footnote{Julia library for composite optimization, \url{https://github.com/aldma/Bazinga.jl}, \cite{De_Marchi_2023}\cite{demarchi2024implicit}}, namely setting a global tolerance $\mathrm{TOL}$ for termination checks, calculating an initial inner tolerance as $\sqrt[3]{\mathrm{TOL}}$ for the subproblems, and for each iteration in which the algorithm does not stop, multiplying the inner tolerance by a decrease factor $\kappa\in (0,1)$. As solvers of the subproblems, both the proximal gradient method and the quasi-Newton method PANOC\textsuperscript{+} \cite{Themelis_2018,De_Marchi_2022} are implemented. For the proximal gradient algorithm, the python library \texttt{copt}\footnote{\url{https://github.com/openopt/copt}, \cite{copt}} is used. The PANOC\textsuperscript{+} implementation was done by adapting the code from the julia Library\footnote{\url{https://github.com/JuliaFirstOrder/ProximalAlgorithms.jl}} by Alberto de Marchi into Python.

As a termination criterion for SALM, we check the following two conditions modeled after the M-stationarity conditions, we stop the algorithm if both conditions are fulfilled.
\begin{itemize}
\item The last subproblem has been solved with a tolerance of $\mathrm{TOL}_{\mathrm{dual}}$, indicating approximate stationarity of the augmented Lagrangian.
\item We have $\|h(x^k)\|_\infty<\mathrm{TOL}_{\mathrm{prim}}$ and $\|\min\{-g(x^k),\lambda^k\}\|_\infty<\mathrm{TOL}_{\mathrm{prim}}$, indicating approximate feasibility and complementarity.
\end{itemize}

For parameter values, we chose $u_{\max}=v_{\max}=-v_{\min}=10^8$, $\rho_0 =1$, $\kappa = 0.1$, $\gamma = 2$ and $\tau = 0.8$. We set the maximum iterations for SALM to $200$, and the ones for the subproblem solvers to $500$ per outer 
iteration $ k $. In all instances of SALM in this section, we chose a global tolerance of
\begin{align*}
\mathrm{TOL}=\mathrm{TOL}_{\mathrm{prim}}=\mathrm{TOL}_{\mathrm{dual}}=10^{-6}.
\end{align*}

\subsection{Sparse Portfolio Optimization}

The test problem from the field of mathematical finance is the sparse formulation of a portfolio optimization problem, specifically the Markowitz mean-variance model (cf.\ \cite{optimization_finance}). The problem in its original form is formulated as
\begin{align*}
\min_{x}\frac{1}{2}x^TQx \quad\text{s.t.}\quad \mu^Tx\geq \varrho,\quad \mathbf{1}_n^Tx =1 ,\quad x\geq 0.
\end{align*}
The vector $x\in\R^n$ is seen as the amount invested in a collection of $n$ financial assets. Expected risk $x^TQx$ and return $\mu^Tx$ are modeled by determining the covariance matrix $Q\in \R^{n\times n}$ and a vector $\mu$ of mean values of each asset, which have been calculated from past stock market performance. The value $\varrho>0$ indicates a minimal return threshold. We assume to have a budget of $1$. The equality constraint then indicates that the whole budget must be spent on assets. Finally, we want to invest money, hence only allowing positive spending.

Splitting the investment onto a broad range of assets with potentially small amounts per position may not be feasible in practice due to for example fixed transaction costs for each purchase \cite{Chen_2017}. Thus, we modify the original problem in order to obtain a sparse solution to the portfolio optimization:
\begin{align}\label{eq:portfolioSparse}
\min_{x}\frac{1}{2}x^TQx+\beta\varphi(x) \quad\text{s.t.}\quad \mu^Tx\geq \varrho,\quad \mathbf{1}_n^Tx =1,\quad x\geq 0,
\end{align}
with a sparsity promoting function $\varphi$ and regularization factor $\beta>0$, we obtain a composite formulation this way. A natural choice would be $\varphi = \|\cdot\|_0$. However, in view of the presented convergence theory, we chose the $\ell_0$-norm surrogate $\varphi = \varphi_\delta^{\mathrm{MCP}}$ from 
Example~\ref{ex:MCP} with $\delta=0.1$ for testing our SALM algorithm. Determining the proximal operator of the MCP function is an elementary, but quite arduous task, we refer to \cite{Shen_2019} for more details, where the prox-operator is determined for a slightly modified version of MCP.

We additionally ran the problem with $\ell_p^p$-regularization for $0<p<1$. This function is in particular not locally Lipschitz in zero, which we use to demonstrate the numerical differences in the algorithmic performance of SALM, especially in view of the Lipschitz properties discussed in Section \ref{sect:Lip}. The $\ell_p^p$-quasinorm admits analytical expressions for its proximal operator if $p=\frac{1}{2}$ or $p=\frac{2}{3}$ \cite{Liu_2024}. We chose $p=\frac{1}{2}$ for our numerical experiments.

We obtained the problem data from \cite{Frangioni_2007}\footnote{\url{https://commalab.di.unipi.it/datasets/mv/}}. It contains three sets $P_1,P_2,P_3$ consisting each of $10$ problems with randomly generated data and dimension $n=200$. Each set was generated by the authors according to a desired diagonal dominance ratio of the matrices $Q$, for more details we refer to the documentation.

On each of the $30$ problems we ran the SALM algorithm with the proximal gradient method (PG) and PANOC\textsuperscript{+} as subproblem solvers. As a reference solver, we ran the Gurobi solver\footnote{\url{https://www.gurobi.com}} on an equivalent MIQP\footnote{Mixed Integer Quadratic Programming} formulation of the $\ell_0$-regularized problem \eqref{eq:portfolioSparse} obtained from \cite{Feng2013}. For the regularization, we found a good working setup by choosing a common regularization factor for each of the three problem classes as indicated in Table \ref{tab:portfolioRegularization}. We compiled the results in two tables, namely Table \ref{tab:portfolio_MCP} for MCP regularization and Table \ref{tab:portfolioLp12} for $\ell_{p}^{p}$ regularization. We use the following abbreviations in reference to the terminal point $x^*$ of the algorithm:
\begin{itemize}
 \item Method: The SALM implementations are labeled by the subproblem solver used, PG standing for SALM with proximal gradient, PANOC+ for the SALM with PANOC\textsuperscript{+}.
 \item S: Status, $0$ when the termination criterion was met, $1$ if the algorithm failed to find an acceptable solution before the maximum number of iterations was reached.
 \item OI: Outer iterations of the SALM Loop.
 \item II: Cumulated inner iterations of the subproblem solver over all SALM steps.
 \item Obj.: Value of the objective function $\Psi(x^*)=\frac{1}{2}(x^*)^T Q x^* + \beta\varphi(x^*)$.
 \item NNZ: Number of non-zero entries in $x^*$.
 \item RE: relative Error calculated by
 \begin{align*}
\mathrm{RE}=\frac{\mathrm{obj}_{\mathrm{SALM}}-\mathrm{obj}_{\mathrm{Gurobi}}}{\mathrm{obj}_{\mathrm{Gurobi}}},
\end{align*}
\item Risk: expected risk term $\frac{1}{2}(x^*)^T Q x^*$.
\item Return: expected return $\mu^T x^*$.
\item $\lambda$: Multiplier for the minimum return constraint $\mu^T x\geq \varrho$.
\item $\mu$: Multiplier for the budget constraint $\mathbf{1}_n^Tx = 1$.
\end{itemize}

It is worth to note that we incorporated the bound constraint $x\geq 0$ directly into the formulations of the proximal operators.
As a starting vector we chose $x_0=\frac{1}{n}\cdot \mathbf{1}_n$.
\begin{table}[htbp]
\label{tab:portfolioRegularization}
\centering
\begin{tabular}{@{}llr@{}}
\toprule
Problem Class & Problem Class Name & $\beta$ \\ \midrule
$P_1$         & \texttt{orl200-005} & $25$    \\
$P_2$         & \texttt{orl200-05} & $100$   \\
$P_3$         & \texttt{pard200} & $50$    \\ \bottomrule
\end{tabular}
\caption{Regularization parameters for the portfolio problem classes}
\end{table}

With MCP regularization, SALM was able solve all problems except the instance of \texttt{pard200\_d} with the PG subproblem solver. For this particular problem, we also see that the multipliers grow quite large, probably indicating unbounded behavior, as in all other instances the multipliers are very tame, being zero in all except one other problem instance. We also see that all successfully obtained solutions have a sparsity of $2$ to $6$ non-zero entries, which are all quite comparable to the reference Gurobi values.

Another, albeit somewhat expected, effect we can observe is that the quasi-Newton approach of PANOC\textsuperscript{+} generally leads to convergence in fewer iterations in the subproblems compared to the proximal gradient method. This is especially visible when we look at the performance profile for the inner iterations in Figure \ref{fig:PPinnerIt}.

\begin{figure}
\centering
\begin{tikzpicture}
\begin{axis}[
    width = 10cm,
    xlabel={Cumulated inner iterations},
    ylabel={Number of problems solved},
    grid=major,
    legend pos=south east,
    ymajorgrids=true,
    xmajorgrids=true,
    legend style={font=\tiny}
]

\addplot[
    thick,
    const plot
] table[
    x=tau,
    y=rho,
    col sep=comma
] {ppPGinnerItMCP.csv};
\addlegendentry{MCP, PG}

\addplot[
    thick,
    dashed,
    const plot
] table[
    x=tau,
    y=rho,
    col sep=comma
] {ppPANOCinnerItMCP.csv};
\addlegendentry{MCP, PANOC\textsuperscript{+}}

\addplot[
    red,
    thick,
    const plot
] table[
    x=tau,
    y=rho,
    col sep=comma
] {ppPGinnerItLp.csv};
\addlegendentry{$\ell_{p}^{p}$, PG}

\addplot[
    red,
    thick,
    dashed,
    const plot
] table[
    x=tau,
    y=rho,
    col sep=comma
] {ppPANOCinnerItLp.csv};
\addlegendentry{$\ell_{p}^{p}$, PANOC\textsuperscript{+}}

\end{axis}
\end{tikzpicture}
\caption{Performance profile of cumulated inner iterations}
\label{fig:PPinnerIt}
\end{figure}

However, when looking at computation time, we see that PG performs slightly better, indicating that the PANOC\textsuperscript{+}-iterations require a significantly higher computational effort than the ones for proximal gradient, cf. Figure \ref{fig:PPCPU}.

\begin{figure}
\centering
\begin{tikzpicture}
\begin{axis}[
    width = 10cm,
    xlabel={Computation time},
    ylabel={Number of problems solved},
    grid=major,
    legend pos=south east,
    ymajorgrids=true,
    xmajorgrids=true,
    legend style={font=\tiny}
]

\addplot[
    thick,
    const plot
] table[
    x=tau,
    y=rho,
    col sep=comma
] {ppPGcpuTimeMCP.csv};
\addlegendentry{MCP, PG}

\addplot[
    thick,
    dashed,
    const plot
] table[
    x=tau,
    y=rho,
    col sep=comma
] {ppPANOCcpuTimeMCP.csv};
\addlegendentry{MCP, PANOC\textsuperscript{+}}

\addplot[
    red,
    thick,
    const plot
] table[
    x=tau,
    y=rho,
    col sep=comma
] {ppPGcpuTimeLp.csv};
\addlegendentry{$\ell_{p}^{p}$, PG}

\addplot[
    red,
    thick,
    dashed,
    const plot
] table[
    x=tau,
    y=rho,
    col sep=comma
] {ppPANOCcpuTimeLp.csv};
\addlegendentry{$\ell_{p}^{p}$, PANOC\textsuperscript{+}}

\addplot[
    green,
    thick,
    const plot
] table[
    x=tau,
    y=rho,
    col sep=comma
] {ppGurobicpuTime.csv};
\addlegendentry{Gurobi}

\end{axis}
\end{tikzpicture}
\caption{Performance profile of for computation time in seconds}
\label{fig:PPCPU}
\end{figure}

The same behavior can also be observed when comparing the subproblem solvers for the $\ell_{p}^{p}$-regularized problems also displayed in the same performance profiles. But in contrast to MCP, we here have a total number of $12$ problem instances (all using the proximal gradient subproblem solver), in which the algorithm fails to find an acceptable solutions in time. Otherwise, the comparability of the results to the ones from Gurobi and MCP is limited, as the latter ones always contribute a constant value outside (a neighborhood) of zero, whereas the $\ell_{p}^{p}$-regularization scales infinitely with the distance from zero. This property also leads to a significantly lower sparsity in the result for this regularization method. We also see that especially in the case of failure, but also for successful instances, the multipliers take on higher values than the ones for MCP regularization.

We interpret these results as evidence for our theoretical necessity of local Lipschitzness of $\varphi$ in Theorems \ref{thm:bounded->KKT}, \ref{thm:lagrangeBounded}, Section \ref{sect:counterex} in order to ensure sensible behavior of the multiplier sequences, and consequently better convergence guarantees of SALM.

\section{Conclusions}\label{Sec:Conclusions}

We presented the Safeguarded Augmented Lagrangian Method as an algorithm capable of performing optimization on composite constrained problems with a locally Lipschitz continuous regularization function. We obtained a global convergence result to an M-stationary point under the minimally restrictive P\L{}CQ, equivalently under the error bound condition. The boundedness of the Lagrange multipliers played a central role in the convergence analysis, and not imposing a Lipschitz property onto $\varphi$ may lead to unbounded multipliers, as demonstrated in the counterexample. Our numerical findings underline our theoretical considerations about the choice of a regularization function.

Recently, De Marchi et al.\ introduced the Elastic SALM in \cite{elasticSALM}, a modification of the SALM algorithm that allows the safeguards to grow in a controlled way. This approach has the advantage of unifying the good global convergence guarantees of SALM with some desirable properties of the classical ALM when applied to convex problems.
As to the best of our knowledge, it is not yet clear, if Elastic SALM also has a global convergence property for potentially non-convex problems under EB/P\L{}CQ, which might be a topic for further research.

\printbibliography

\appendix

\section{Appendix}

This proof for CAM-stationarity as a necessary optimality condition is a direct extension of \cite[Theorem 3.3]{Andreani_2010} to the composite setting.

\begin{theorem}\label{thm:CAMnecessary}
 Let $x^*$ be a local minimizer of \eqref{optProb}. Then $x^*$ is CAM-stationary.
\end{theorem}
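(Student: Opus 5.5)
We follow the classical localization/penalization argument of \cite[Theorem 3.3]{Andreani_2010}, adapted to the composite setting via the subdifferential Fermat rule (Lemma~\ref{lem:genFermat}) and the sum rule (Theorem~\ref{thm:SumRuleMord}). Since $x^*$ is a local minimizer, we fix $\delta>0$ such that $f(x^*)+\varphi(x^*)\leq f(x)+\varphi(x)$ for all feasible $x\in\overline{B}_\delta(x^*)$. For a sequence $\rho_k\to\infty$ we consider the penalized, localized problems
\begin{align*}
\min_{x\in\overline{B}_\delta(x^*)} F_k(x):=f(x)+\varphi(x)+\frac{\rho_k}{2}\left(\|g(x)_+\|^2+\|h(x)\|^2\right)+\frac{1}{2}\|x-x^*\|^2 .
\end{align*}
Because $\varphi$ is continuous under Assumption~\ref{assu:optProb} and the ball is compact, each problem has a global minimizer $x^k$.

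The first step is to show $x^k\to x^*$. From $F_k(x^k)\leq F_k(x^*)=f(x^*)+\varphi(x^*)$, boundedness of $f+\varphi$ on the ball gives $\rho_k\Phi(x^k)$ bounded. Hence every accumulation point $\bar x$ is feasible. Passing to the limit in
\[
f(x^k)+\varphi(x^k)+\tfrac12\|x^k-x^*\|^2\leq f(x^*)+\varphi(x^*)
\]
and using local minimality yields $\|\bar x-x^*\|=0$. Thus $x^k\to x^*$, and in particular $x^k$ lies in the open ball for $k$ large. The same inequality also shows $\rho_k\Phi(x^k)\to 0$: the term $f(x^k)+\varphi(x^k)-f(x^*)-\varphi(x^*)$ tends to $0$ by continuity, so $\limsup \rho_k\Phi(x^k)\leq 0$. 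This is the key quantitative fact needed for part (d).

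Next, for $k$ large, $x^k$ is an unconstrained local minimizer of $F_k$. Lemma~\ref{lem:genFermat} and Theorem~\ref{thm:SumRuleMord}(b) then give $0\in\nabla f(x^k)+\partial\varphi(x^k)+\sum_i\rho_k g_i(x^k)_+\nabla g_i(x^k)+\sum_j\rho_k h_j(x^k)\nabla h_j(x^k)+(x^k-x^*)$. We set $\lambda_i^k:=\rho_k g_i(x^k)_+$, $\mu_j^k:=\rho_k h_j(x^k)$ and $\eta^k:=x^*-x^k$. Then (a)--(c) hold with $\|\eta^k\|\to0$. For $i\notin I(x^*)$ we have $g_i(x^*)<0$, so $g_i(x^k)<0$ and hence $\lambda_i^k=0$ for $k$ large; the finitely many initial indices can be discarded or reindexed. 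For (d), note that $|\lambda_i^k g_i(x^k)|=\rho_k g_i(x^k)_+^2$ and $|\mu_j^k h_j(x^k)|=\rho_k h_j(x^k)^2$, so the sum in (d) is at most $2\rho_k\Phi(x^k)\to0$.

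The main obstacle is the limit $\rho_k\Phi(x^k)\to0$, since mere boundedness of $\rho_k\Phi(x^k)$ does not suffice for (d). This limit relies on continuity of $\varphi$ together with local optimality, exactly as in the smooth case. If $\varphi$ were merely lsc, one would instead need $\varphi(x^k)\to\varphi(x^*)$, which follows from lsc combined with the same inequality. Either way the argument goes through under Assumption~\ref{assu:optProb}.
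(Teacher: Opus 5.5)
Your proposal is correct and is essentially the paper's proof. Both use the same localized penalty problem with the proximal term $\frac12\|x-x^*\|^2$ on $\overline{B}_\delta(x^*)$, the same choices $\lambda^k=\rho_k g(x^k)_+$, $\mu^k=\rho_k h(x^k)$, $\eta^k=x^*-x^k$, and the same optimality inequality to obtain $\rho_k\Phi(x^k)\to 0$ for (d); the paper simply takes $\rho_k=k$ and works on a subsequence. You are slightly more careful than the paper in stating explicitly that $x^k$ lies in the open ball for large $k$, which is what justifies applying the unconstrained Fermat rule.
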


\begin{proof}
We denote $\Psi:= f+\varphi$. As $x^*$ is a local minimizer, there exists a $\delta>0$ such that $\Psi(x^*)\leq \Psi(x)$ for all
feasible $x\in \overline{B}_\delta (x^*)$. We consider the auxiliary problem
\begin{align}\label{auxProb1}
 \min_{x\in \R^n}\Psi(x) +\frac{1}{2}\|x-x^*\|^2 \quad \text{s.t.}\quad g(x)\leq 0, \quad  h(x)=0, \quad x\in \overline{B}_\delta(x^*).
 \end{align}
By construction, $x^*$ is the unique solution of Problem \eqref{auxProb1}. For each $k\in \N$, we define another auxiliary problem as follows:
\begin{align}\label{auxProb2}
 \min_{x\in \R^n}\Psi(x) +\frac{1}{2}\|x-x^*\|^2+\frac{k}{2}\left[\|g(x)_+\|^2+\|h(x)\|^2\right] \quad \text{s.t.} \quad x\in \overline{B}_\delta(x^*).
 \end{align}
As we have a continuous objective function on a compact set, there exists a solution $x^k$ to \eqref{auxProb2} for each $k\in \N$.
By the feasibility of $x^*$ and the optimality of each $x^k$ with respect to \eqref{auxProb2}, we have the property
\begin{align}\label{app:xk-optimality}
 \Psi(x^k)+\frac{1}{2}\|x^k-x^*\|^2+\frac{k}{2}\left[\|g(x^k)_+\|^2+\|h(x^k)\|^2\right]\leq \Psi(x^*).
\end{align}
 Furthermore, as the sequence $\{x^k\}\subseteq \overline{B}_\delta(x^*)$ is bounded, there exists an infinite set $ K\subseteq \N$ and $z^*\in \overline{B}_\delta(x^*)$ such that
 \begin{align*}
  \lim_{k\to\infty,k\in K}x^k = z^*.
 \end{align*}
 We can now divide expression \eqref{app:xk-optimality} by $k$ and take the limit $k\to\infty$ for $k\in K$. This yields
 \begin{align*}
  \|g(z^*)_+\|^2+\|h(z^*)\|^2=0,
 \end{align*}
 showing feasibility of $z^*$ for problem \eqref{auxProb1}. Additionally, we have
 \begin{align*}
   \Psi(x^k)+\frac{1}{2}\|x^k-x^*\|^2\leq \Psi(x^k)+\frac{1}{2}\|x^k-x^*\|^2+\frac{k}{2}\left[\|g(x^k)_+\|^2+\|h(x^k)\|^2\right]\leq \Psi(x^*),
 \end{align*}
and taking the limit $k\to\infty$, $k\in K$ yields
\begin{align*}
 \Psi(z^*)+\frac{1}{2}\|z^*-x^*\|\leq \Psi(x^*),
\end{align*}
which implies $z^* = x^*$ by the optimality of $x^*$ for Problem \eqref{auxProb1}, thus showing
\begin{align}\label{app:xk-convergence}
 \lim_{k\to\infty,k\in K}x^k= x^*.
\end{align}

We now define the sequences $\lambda^k:=k g(x^k)_+$, $\mu^k:=k h(x^k)$ and $\eta^k:=x^*-x^k$ for $k\in \K$. Clearly, $\lambda^k \geq 0$ and for $i\notin I(x^*)$, by the continuity of $g$ we have $g_i(x^k)\leq 0$ and hence $\lambda_i^k=0$ for $k$ sufficiently large. We take $ K_1\subseteq K$ such that $\lambda_i^k=0$ for all $i\notin I(x^*)$ and $k\in K_1$ and show
that all properties of CAM-stationarity hold for the above defined sequences on the index set $ K_1$.

In \eqref{app:xk-convergence} we showed property (a) of CAM-stationarity for the sequence $\{x_k\}_{k\in K_1}$. By definition, $\{\eta^k\}_{k\in K_1}$ fulfills (c).

By local optimality with respect to the auxiliary problem \eqref{auxProb2}, the generalized Fermat rule from Lemma \ref{lem:genFermat} and the Mordukhovich subdifferential sum rule yields
\begin{align*}
 0\in \nabla f(x^k)+\partial \varphi(x^k)+x^k-x^*+\sum_{i=1}^m kg_i(x^k)_+\nabla g_i(x^k)+\sum_{j=1}^p k h_j(x^k)\nabla h_j(x^k) ,
\end{align*}
which is equivalent to
\begin{align*}
 \eta^k = x^*-x^k\in \nabla f(x^k)+\partial \varphi(x^k)+\sum_{i=1}^m \lambda_i^k\nabla g_i(x^k)+\sum_{j=1}^p \mu_j^k\nabla h_j(x^k),
\end{align*}
which shows property (b), whereas (c) holds by definition of $ \eta_k $
and (a).

Once again considering \eqref{app:xk-optimality} and taking the limit $k\to \infty$ for $k\in K_1$ yields
\begin{align*}
 \lim_{k\to\infty,k\in K_1}\sum_{i=1}^m k (g_i(x^k)_+)^2 +\sum_{j=1}^p k h_j(x^k)^2 =0.
\end{align*}
As
\begin{align*}
 |\mu^k_jh_j(x^k)| = |kh_j(x^k)^2| = kh_j(x^k)^2
\end{align*}
and
\begin{align*}
 |\lambda_i^k g_i(x^k)| = |kg_i(x^k)_+g_i(x^k)| = |k(g_i(x^k)_+)^2| = k(g_i(x^k)_+)^2,
\end{align*}
as well as $\lambda_i^k=0$ for all $k\in K_1$ and $i\notin I(x^*)$, we have
\begin{align*}
 \lim_{k\to\infty,k\in K_1}\sum_{i\in I(x^*)} |\lambda_i^k g_i(x^k)| +\sum_{j=1}^p |\mu^k_jh_j(x^k)| =0,
\end{align*}
which shows the last property (d).
\end{proof}

\pgfplotstableread[col sep=comma, empty cells with={--}]{portfolio_MCP.csv}\loadedtable

\begin{table}[htbp]
\tiny
\centering

\pgfplotstabletypeset[
    col sep=comma,
    columns={problemName,methodName,status,outerIterations,innerIterations,obj,nnz,relativeError,risk,ReturnVal,lambda,mu},
    columns/problemName/.style={
    column name=Problem,
    string type,
    postproc cell content/.append code={
        \pgfmathtruncatemacro{\rowmod}{mod(\pgfplotstablerow,3)}%
        \ifnum\rowmod=0
        \else
            \pgfkeysalso{@cell content={}}%
        \fi
    }
},
    columns/methodName/.style={column name=Method, string type, column type = l},
    columns/status/.style={column name=S,
    		column type=r,
    		fixed,
    		postproc cell content/.append code={
        		\pgfmathtruncatemacro{\rowmod}{mod(\pgfplotstablerow,3)}%
        		\ifnum\rowmod=0
            		\pgfkeysalso{@cell content={--}}%
        		\fi
    		}
	},
    columns/outerIterations/.style={column name=OI,
    		column type=r,
    		fixed,
    		postproc cell content/.append code={
        		\pgfmathtruncatemacro{\rowmod}{mod(\pgfplotstablerow,3)}%
        		\ifnum\rowmod=0
            		\pgfkeysalso{@cell content={--}}%
        		\fi
    		}
	},
    columns/innerIterations/.style={column name=II,
    		column type=r,
    		fixed,
    		postproc cell content/.append code={
        		\pgfmathtruncatemacro{\rowmod}{mod(\pgfplotstablerow,3)}%
        		\ifnum\rowmod=0
            		\pgfkeysalso{@cell content={--}}%
        		\fi
    		}
	},
    columns/obj/.style={column name=Obj., column type=r,fixed,precision=3},
    columns/nnz/.style={column name=NNZ, column type=r},
    columns/relativeError/.style={column name=RE,
    		column type=r,
    		fixed,
    		precision=3,
    		postproc cell content/.append code={
        		\pgfmathtruncatemacro{\rowmod}{mod(\pgfplotstablerow,3)}%
        		\ifnum\rowmod=0
            		\pgfkeysalso{@cell content={--}}%
        		\fi
    		}
	},
    columns/risk/.style={column name=Risk, column type=r,fixed,precision=3},
    columns/ReturnVal/.style={column name=Return, column type=r,fixed,precision=5},
    columns/lambda/.style={
    column name=$\lambda$,
    column type=r,
    sci,
    precision=2,
    postproc cell content/.append code={
        \pgfmathtruncatemacro{\rowmod}{mod(\pgfplotstablerow,3)}%
        \ifnum\rowmod=0
            \pgfkeysalso{@cell content={--}}%
        \fi
    }
},
columns/mu/.style={
    column name=$\mu$,
    column type=r,
    sci,
    precision=2,
    postproc cell content/.append code={
        \pgfmathtruncatemacro{\rowmod}{mod(\pgfplotstablerow,3)}%
        \ifnum\rowmod=0
            \pgfkeysalso{@cell content={--}}%
        \fi
    }
},
    every head row/.style={before row=\toprule, after row=\midrule},
    every last row/.style={after row=\bottomrule}
]{\loadedtable}

\caption{Sparse portfolio optimization results with MCP-regularization}
\label{tab:portfolio_MCP}

\end{table}

\pgfplotstableread[col sep=comma, empty cells with={--}]{portfolio_Lp12.csv}\loadedtable

\begin{table}[htbp]
\tiny
\centering

\pgfplotstabletypeset[
    col sep=comma,
    columns={problemName,methodName,status,outerIterations,innerIterations,obj,nnz,relativeError,risk,ReturnVal,lambda,mu},
    columns/problemName/.style={
    column name=Problem,
    string type,
    postproc cell content/.append code={
        \pgfmathtruncatemacro{\rowmod}{mod(\pgfplotstablerow,3)}%
        \ifnum\rowmod=0
        \else
            \pgfkeysalso{@cell content={}}%
        \fi
    }
},
    columns/methodName/.style={column name=Method, string type, column type = l},
    columns/status/.style={column name=S,
    		column type=r,
    		fixed,
    		postproc cell content/.append code={
        		\pgfmathtruncatemacro{\rowmod}{mod(\pgfplotstablerow,3)}%
        		\ifnum\rowmod=0
            		\pgfkeysalso{@cell content={--}}%
        		\fi
    		}
	},
    columns/outerIterations/.style={column name=OI,
    		column type=r,
    		fixed,
    		postproc cell content/.append code={
        		\pgfmathtruncatemacro{\rowmod}{mod(\pgfplotstablerow,3)}%
        		\ifnum\rowmod=0
            		\pgfkeysalso{@cell content={--}}%
        		\fi
    		}
	},
    columns/innerIterations/.style={column name=II,
    		column type=r,
    		fixed,
    		postproc cell content/.append code={
        		\pgfmathtruncatemacro{\rowmod}{mod(\pgfplotstablerow,3)}%
        		\ifnum\rowmod=0
            		\pgfkeysalso{@cell content={--}}%
        		\fi
    		}
	},
    columns/obj/.style={column name=Objective, column type=r,fixed,precision=3},
    columns/nnz/.style={column name=NNZ, column type=r},
    columns/relativeError/.style={column name=RE,
    		column type=r,
    		fixed,
    		precision=3,
    		postproc cell content/.append code={
        		\pgfmathtruncatemacro{\rowmod}{mod(\pgfplotstablerow,3)}%
        		\ifnum\rowmod=0
            		\pgfkeysalso{@cell content={--}}%
        		\fi
    		}
	},
    columns/risk/.style={column name=Risk, column type=r,fixed,precision=3},
    columns/ReturnVal/.style={column name=Return, column type=r,fixed,precision=5},
    columns/lambda/.style={
    column name=$\lambda$,
    column type=r,
    sci,
    precision=2,
    postproc cell content/.append code={
        \pgfmathtruncatemacro{\rowmod}{mod(\pgfplotstablerow,3)}%
        \ifnum\rowmod=0
            \pgfkeysalso{@cell content={--}}%
        \fi
    }
},
columns/mu/.style={
    column name=$\mu$,
    column type=r,
    sci,
    precision=2,
    postproc cell content/.append code={
        \pgfmathtruncatemacro{\rowmod}{mod(\pgfplotstablerow,3)}%
        \ifnum\rowmod=0
            \pgfkeysalso{@cell content={--}}%
        \fi
    }
},
    every head row/.style={before row=\toprule, after row=\midrule},
    every last row/.style={after row=\bottomrule}
]{\loadedtable}

\caption{Sparse portfolio optimization results with $\ell^{\frac{1}{2}}$-regularization}
\label{tab:portfolioLp12}

\end{table}

\end{document}